\newtheorem{theorem}{Theorem}[section]
\newtheorem{lemma}[theorem]{Lemma}
\newtheorem{proposition}[theorem]{Proposition}
\newtheorem{corollary}[theorem]{Corollary}
\theoremstyle{definition}
\newtheorem{definition}[theorem]{Definition}
\newtheorem{example}[theorem]{Example}
\theoremstyle{remark}
\newtheorem{remark}[theorem]{Remark}
\newcommand{\ud}[1]{{#1}^{\diamond}}
\newcommand{\udd}[1]{{#1}^{\diamond\diamond}}
\newcommand{\linf}{\mathrel{\text{\raisebox{0.1ex}{\scalebox{0.8}{$\wedge$}}}}}
\newcommand{\norm}[1]{\left\lVert #1 \right\rVert}
\newcommand{\abs}[1]{\left\lvert #1 \right\rvert}
\newcommand{\unto}{\xrightarrow{un}}
\newcommand{\uoto}{\xrightarrow{uo}}
\newcommand{\oto}{\xrightarrow{o}}
\newcommand{\nto}{\xrightarrow{\norm{\cdot}}}
\newcommand{\func}[3]{#1:#2 \rightarrow #3}
\newcommand{\RR}{\mathbb{R}}
\begin{document}

\setcounter{page}{1}

\title{Weak Unbounded Norm Topology and  Dounford-Pettis Operators}
\author[M. Matin]{Mina Matin}
\author[K. Haghnejad Azar]{Kazem Haghnejad Azar}
\author[R. Alavizadeh]{Razi Alavizadeh}
\address{Department  of  Mathematics  and  Applications, Faculty of  Sciences, University of Mohaghegh Ardabili, Ardabil, Iran.}
\email{\textcolor[rgb]{0.00,0.00,0.84}{minamatin1368@yahoo.com}}
\email{\textcolor[rgb]{0.00,0.00,0.84}{haghnejad@uma.ac.ir}}
\email{\textcolor[rgb]{0.00,0.00,0.84}{ralavizadeh@uma.ac.ir}}
\subjclass[2010]{Primary 46B42; Secondary 46A40, 46C05.}
\keywords{ $un$-dual, $wun$-topology, $wun$-Dounford-Pettis.
\newline \indent $^{*}$Corresponding author}
\begin{abstract}
	In this paper, we study $un$-dual (in symbol, $\ud{E}$) of Banach lattice $E$ and compare it with topological dual $E^*$. If $E^*$ has order continuous norm, then $E^* = \ud{E}$. We introduce and study weakly unbounded norm topology ($wun$-topology) on Banach lattices and compare it with weak topology and $uaw$-topology. In the final, we introduce and study $wun$-Dunford-Pettis opertors from a Banach lattice $E$ into Banach space $X$ and we investigate  some of its properties and its relationships with others known operators.
\end{abstract}
\maketitle

\section{Introduction}
In \cite{5}, authors shows that $uo$-convergence need not be given by a topology, but $un$-convergence is topological. We will refer to this topology as $un$-topology. The smallest topology $\tau$ that each $un$-continuous functional $f:E\rightarrow \mathbb{R}$ is continuous with respect to that topology is called weakly unbounded norm topology (for short, $wun$-topology), and we denote it by $\tau_{wun}$.
First  we will ours motivate  to write this article.
\begin{enumerate}
	\item We have defined $un$-continuous operators between two Banach lattices $E$ and $F$ in \cite{12o}, and so we introduced the $un$-dual space for a Banach lattice $E$ and we study some of its properties.
	\item Such as the definition of weak topology for a normed space,  we  define  $wun$-topology  for  a Banach lattice and compare it  with  $uaw$-topology  which is introduced in \cite{15}. We show that in general $un$-topology and $wun$-topology are different and by some conditions both topologies coincide. 
	\item By studying of $uaw$-Dounford-Pettis operators in \cite{zabeti1},  it is  interested to define a new generation of operators as $wun$-Dounford-Pettis operators. We study some of its properties and compare  with other known classifications of operators. 
\end{enumerate}
Here we bring some definitions of need.\\
Let $E$ be a vector lattice and $x\in E$.
A net $(x_{\alpha})_{\alpha \in A}\subseteq E$ is said to be order convergent to $x$ if there is a net 
	$(z_{\beta})_{\beta \in B} $ in $ E $ such that
	$ z_{\beta} \downarrow 0 $ and for every $ \beta \in B$,
	there exists $\alpha_{0} \in A$ such that
	$ | x_{\alpha} - x |\leq z_{\beta}$ whenever $ \alpha \geq \alpha_{0}$.
	We denote this convergence by $ x_{\alpha} \xrightarrow{o} x $ and write 
	that $ (x_{\alpha})_{\alpha} $ is $o$-convergent to $x$. In vector lattice $E$ we write 
$x_\alpha\uoto x$ and say that ($x_\alpha$) is $uo$-convergent to $x$ if $|x_\alpha - x|\linf u\oto 0$
for every $u \in E^+$.
In Banach lattice $E$ we write 
$x_\alpha\unto x$ and say that ($x_\alpha$) is $un$-convergent  to $x$ if $|x_\alpha - x|\linf u\nto 0$ 
for every $u \in E^+$.
 
It  was  observed  in  \cite{5} that  $un$-convergence  is 
topological. Let $x_0\in E$ be arbitrary. For every $\epsilon > 0$ and non-zero $u\in X^+$, put
$$V_{\epsilon,u} =\left\{ x \in X : \norm{\abs{x-x_0}\linf u} < \epsilon \right\}.$$
The collection of all sets of this form is a base of $x_0$ neighbourhoods for a topology, and the convergence in 
this topology agrees with $un$-convergence. We will refer to this topology as $un$-topology.\\
 Recall of \cite{15}, a net $(x_\alpha)$ in Banach lattice $E$ is unbounded absolutely weakly convergente ($uaw$-convergent) to $x$ if $(|x_\alpha - x| \wedge u)$ converges to zero weakly for every $u\in E^+$; we write $x_\alpha \xrightarrow{uaw}x$. It  was  observed  in  \cite{15} that  $uaw$-convergence  is  topological. By Theorem 4 of \cite{15}, if Banach lattice $E$ has order continuous norm, then  $(x_\alpha)\subseteq E$ is $un$-null iff it is $uaw$-null in $E$.\\
Let $E$ be a vector lattice and $e\in E^+$. $e$ is weak unit, if band $B_e$ generated by $e$ is equal  with $E$; equivalently, $x \wedge ne \uparrow x$ for every $x\in E^+$, and $e$ is strong unite when ideal $I_a$ generated by $e$ is equal $E$; equivalently, for every $x\geq 0$ there exists $n\in \mathbb{N}$ such that $x\leq ne$. A positive non-zero vector $a$ in a vector lattice $E$ is an atom if the ideal $I_a$ generated by $a$ coincides with span $a$.
We say that $E$ is non-atomic if it has no atoms. We say that $E$ is atomic if $E$ is the band generated by all the atoms.\\
Let $E$ be a vector lattice, $E^\sim$  be the space of all order continuous functionals on $E$ and $(E^{\sim})^\sim$ be the order bidual of $E$. 
Recall that a subset $A$ of $E$ is $b$-order bounded in $E$ 
if $A$ is order bounded in $(E^{\sim})^\sim$. If each $b$-order bounded subset $A$ of vector lattice $E$ is order bounded, we say that $E$ has property $(b)$.\\
Let  $X$, $Y$ be two Banach spaces, then  the continuous operator $T:X\rightarrow Y$ is said to be: 
\begin{itemize}
	\item 
	 \textbf{Dounford-Pettis}
	whenever $(x_n)\subseteq X$ and $x_n \xrightarrow{w}0$, then $T(x_n)\xrightarrow{\|.\|}0$.
	\item 
	  \textbf{weakly compact}
	whenever $T$ carries the closed unit ball of $X$ to a relatively weakly compact subset of $Y$.\\
\end{itemize}

Let  $E$ be a Banach lattice and  $X$  Banach space, then  the continuous operator $T:E\rightarrow X$ is said to be:  
\begin{itemize}	
	\item
	\textbf{$M$-weakly compact}
 if $ \lim \|Tx_{n}\| = 0$ holds for
every norm bounded disjoint sequence $(x_{n}) $ of $ E $. 
\item 
 \textbf{order weakly compact}
whenever $T[0,x]$ is relatively weakly compact subset of $X$ for each $x\in E^+$.
\item  
\textbf{b-weakly compact}
whenever for each $b$-order bounded subset $A$ of $E$, $T(A)$ is a relatively weakly compact. The class of $b$-weakly compact operators was firstly  introduced by  Alpay, Altin and Tonyali \cite{Alpay1}, the class  of  all  $b$-weakly compact operators between $E$ and $X$ will be denoted by
 $W_b(E,X)$.   One of the interesting properties of  the class of $b$-weakly compact operators is that it satisfies the domination property. Some more investigations on $W_b(E,X)$ were done by   \cite{Alpay2, 2c, Hag}. 
 \item 
\textbf{$uaw$-Dunford-Pettis} if for every norm bounded  sequence $(x_{n})$ in $E$,  $ x_{n}\xrightarrow{uaw} 0$ in $ E $ implies
$\| Tx_{n}\|\rightarrow 0$ in $X$. These operators are introduced and examined in \cite{zabeti1}, the class  of  all  $uaw$-Dounford-Pettis operators on $E$  will be denoted by
$B_{UDP}(E)$. This is continued in \cite{Hui}.\\
Moreover if $F$ is a Banach lattice, a continuous operator $T:E\rightarrow F$ is said to be
\item 
\textbf{$un$-continuous} if for every norm bounded and $un$-null net $(x_\alpha)\subseteq E$, $T(x_\alpha)\xrightarrow{un}0$ in $F$. These operators are introduced and examined in \cite{12o}.
\end{itemize}
Recall that a Banach lattice $E$ is said to have dual positive Schur property if every $w^*$-null positive sequence in $E^*$ is norm null.\\

Throughout this article $E$ and $X$ will be assumed to be Banach lattice and Banach space, respectively, and  $(e_n)$ is the sequence of real numbers whose $n^{th}$ term is one and the rest are zero, i.e. $e_n := (0,0,...,0,1,0,0,...)$ unless specified otherwise. For a normed space $X$,  $A\subseteq X$ and $B\subseteq X^*$,  $\sigma (A, B)$ is the smallest topology for $A$ such that each $f\in B$ is continuous on $A$ with respect to this topology.

\section{weakly unbounded norm topology}
 Let $E$ be a Banach lattice. A functional $f:E\rightarrow \mathbb{R}$ is $un$-continuous, if $x_\alpha \xrightarrow{un}0$ implies $f(x_\alpha)\rightarrow 0$ for each norm bounded net $(x_\alpha)\subseteq E$.
We denote the vector space of all $un$-continuous functionals on $E$ by $\ud{E}$ and we call it $un$-dual of Banach lattice $E$.\\
It is clear that $\ud{E}$ is a subspace of $E^*$. The functional $f:\ell^1\rightarrow \mathbb{R}$ defined by $f(x_1,x_2,x_3,...)= \sum_{i=1}^{n}x_i$ is continuous but  is not $un$-continuous. Therefore $\ud{E} \neq E^*$.\\
Let $f\in \ud{E}$, we define $\|f\|_{\ud{E}} = \sup\{|f(x)|: x\in E,\  \|x\|_E\leq 1\}$. It is clear that $\ud{E}$ is a normed space. 

\begin{theorem}\label{po}
	Let $E$ be a Banach lattice. Then we have the following assertions.
	\begin{enumerate}
	\item If  $E^*$ has order continuous norm, then $E^* = \ud{E}$. 
		\item $\ud{E}$ is an ideal in $E^*$.
\item If $E$ is $AM$-space, then $\ud{E}$ is $AL$-space.
\end{enumerate}
\end{theorem}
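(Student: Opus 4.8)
The plan is to handle the three statements largely separately --- (2) feeds into (3), while (1) carries the real content. I will use throughout that whenever $(x_\alpha)$ is norm bounded and $un$-null, so is $(\abs{x_\alpha})$, and that $\abs{f(x_\alpha)}\le\abs f(\abs{x_\alpha})$; hence it always suffices to test $un$-continuity on positive bounded $un$-null nets, and in (1) on positive functionals. For (1), only the inclusion $\ud E\supseteq E^{*}$ needs an argument: fix $f\in(E^{*})^{+}$, a bound $M>0$, and $0\le x_\alpha$ with $\norm{x_\alpha}\le M$ and $\norm{x_\alpha\wedge u}\to 0$ for every $u\in E^{+}$; I must show $f(x_\alpha)\to 0$. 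Splitting $f(x_\alpha)=f(x_\alpha\wedge u)+f((x_\alpha-u)^{+})$, whose first summand tends to $0$ for each fixed $u$, reduces the task to a uniform tail estimate: for each $\varepsilon>0$ there is $u\in E^{+}$ with $f((x-u)^{+})<\varepsilon$ for all $x\in E^{+}$ with $\norm x\le M$ (the relevant supremum over such $x$ is nonincreasing in $u$).

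The tail estimate is exactly where order continuity of $E^{*}$ is needed; I would obtain it by contradiction, using that an $\ell^{1}$-sublattice in $E$ is incompatible with $E^{*}$ having order continuous norm. If the estimate fails, there are $\delta>0$ and, for each $u$, a vector $x_u\in E^{+}$ with $\norm{x_u}\le M$ and $f((x_u-u)^{+})>\delta$. From these I build inductively a norm bounded, almost disjoint sequence $(d_k)\subseteq E^{+}$ with $f(d_k)>\delta$: set $v_{k-1}=d_1+\dots+d_{k-1}$ and $d_k=(x_{n_kv_{k-1}}-n_kv_{k-1})^{+}$, with $n_k$ chosen large. The elementary input that makes this work is that $\norm{(x-nu)^{+}\wedge u}\to 0$ as $n\to\infty$, uniformly over $\norm x\le M$: summing the telescoping identity $(x-2^{i}u)^{+}\wedge 2^{i}u=x\wedge 2^{i+1}u-x\wedge 2^{i}u$ over $0\le i\le m-1$ and noting each summand dominates $(x-2^{m-1}u)^{+}\wedge u$ gives $m\,\bigl((x-2^{m-1}u)^{+}\wedge u\bigr)\le x$, hence $\norm{(x-2^{m-1}u)^{+}\wedge u}\le\norm x/m$. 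Choosing the $n_k$ so that $\norm{d_k\wedge v_{k-1}}$ is small enough, a standard perturbation $e_k=(d_k-\sum_{j\neq k}d_k\wedge d_j)^{+}$ turns $(d_k)$ into a genuinely disjoint bounded sequence in $E^{+}$ with $f(e_k)\ge\delta/2$. Since the $e_k$ are disjoint, $\norm{\sum_k a_ke_k}\le 2M\sum_k\abs{a_k}$, while $\norm{\sum_k a_ke_k}=\norm{\sum_k\abs{a_k}e_k}\ge f(\sum_k\abs{a_k}e_k)/\norm f\ge\tfrac{\delta}{2\norm f}\sum_k\abs{a_k}$; so $(a_k)\mapsto\sum_k a_ke_k$ is a lattice isomorphism of $\ell^{1}$ onto a closed sublattice of $E$ --- the desired contradiction. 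I expect this construction, and especially the bookkeeping that upgrades almost disjoint to genuinely disjoint while keeping $f$ bounded below on the $e_k$, to be the main obstacle. (If $E$ has a strong order unit, then $un$-convergence is norm convergence and (1) is immediate, which is the guiding picture.)

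For (2): $\ud E$ is a linear subspace of $E^{*}$, so it suffices to show it is stable under $f\mapsto f^{+}$. Let $f\in\ud E$ and let $0\le x_\alpha$ be norm bounded and $un$-null. If $f^{+}(x_\alpha)\not\to 0$, pass to a subnet with $f^{+}(x_\alpha)\ge\varepsilon$, and using $f^{+}(x_\alpha)=\sup\{f(y):0\le y\le x_\alpha\}$ pick $y_\alpha$ with $0\le y_\alpha\le x_\alpha$ and $f(y_\alpha)>\varepsilon/2$. Then $(y_\alpha)$ is norm bounded and, because $y_\alpha\wedge u\le x_\alpha\wedge u$, also $un$-null, so $f\in\ud E$ forces $f(y_\alpha)\to 0$, a contradiction. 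Thus $f^{+},f^{-},\abs f\in\ud E$, and if $\abs g\le\abs f$ with $f\in\ud E$ then $\abs{g(x_\alpha)}\le\abs g(\abs{x_\alpha})\le\abs f(\abs{x_\alpha})\to 0$ for every bounded $un$-null net; hence $\ud E$ is an ideal of $E^{*}$.

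For (3): first $\ud E$ is norm closed in $E^{*}$ --- if $f_n\to f$ in $E^{*}$ with $f_n\in\ud E$ and $\norm{x_\alpha}\le M$ is $un$-null, then $\abs{f(x_\alpha)}\le M\norm{f-f_n}+\abs{f_n(x_\alpha)}$ is eventually below any $\varepsilon$ once $n$ is fixed large. With (2), $\ud E$ is a norm closed ideal of $E^{*}$, hence a Banach lattice under the restriction of the norm of $E^{*}$, which coincides with the norm $\norm{\cdot}_{\ud E}$ introduced above. When $E$ is an $AM$-space, $E^{*}$ is an $AL$-space, so $\norm{f+g}=\norm f+\norm g$ for all disjoint $f,g\in(E^{*})^{+}$, in particular for disjoint positive elements of $\ud E$; therefore $\ud E$ is an $AL$-space.
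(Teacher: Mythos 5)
Your proof is correct, and it is far more self-contained than the paper's, which disposes of (1) by citing Theorem 6.4 of \cite{5} (where order continuity of $E^*$ is shown equivalent to every norm bounded $un$-null net being weakly null), of (2) by pointing to Proposition 5.3 of \cite{12}, and of (3) by feeding the order continuity of the $AL$-space $E^*$ back into part (1). For (1) you essentially reprove the cited theorem in the one direction needed: the reduction to positive data, the telescoping estimate $m\bigl((x-2^{m-1}u)^{+}\wedge u\bigr)\le x$, and the disjointification producing a norm bounded disjoint positive sequence on which $f$ stays bounded below are exactly the standard ingredients, and your closing appeal to the incompatibility of a lattice copy of $\ell^1$ with order continuity of $E^*$ is valid (Theorem 4.69 of \cite{1}); in fact you could stop one step earlier, since that same theorem says a norm bounded disjoint positive sequence must already be weakly null, so the $\ell^1$-isomorphism bookkeeping is not strictly needed. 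Your (2), via the Riesz--Kantorovich formula and solidity of the $un$-null condition under domination, is the argument the paper outsources. For (3) your route is genuinely different and arguably more robust: you never invoke part (1), but instead observe that $\ud{E}$ is a norm closed ideal of the $AL$-space $E^*$ and hence an $AL$-space in its own right, whereas the paper identifies $\ud{E}=E^*$ outright; your version would survive in situations where the identification fails. One reassurance: your (correct) observation that $\ud{E}$ is always norm closed in $E^*$ appears to clash with the paper's later remark that $\ud{E}$ "is neither norm closed nor order closed" with $\ud{\ell_2}=c_{00}$ offered as evidence, but that remark is erroneous --- by part (1) of this very theorem $\ud{\ell_2}=(\ell_2)^*=\ell_2$ --- so the conflict is the paper's, not yours.
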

\begin{proof}
		\begin{enumerate}
		\item Proof follows by   Theorem 6.4 of \cite{5}.
			\item Proof has similar argument of Proposition 5.3 of \cite{12}
			\item Let $E$ be an $AM$-space. Since $E^*$ is an $AL$-space, so $E^*$ has order continuous norm and therefore by part (1), we have  $\ud{E}= E^*$ and it is an $AL$-space. 
\end{enumerate}
\end{proof}

\begin{example}\label{Elin} 
Let $c$ be the sublattice of $\ell^\infty$ consisting of all convergent sequences.	
Since $c^*=\ell^1$ has order continuous norm, therefore by preceding theorem, we have  $\ud{c}= c^* =\ell^1$. 
\end{example}

Theorem \ref{po} shows that $\ud{E}$ is a normed sublattice of a Banach lattice of $E$. Thus we  define the second $un$-dual of Banach lattice $E$ which  show by 
 $\udd{{E}}$.
 $\udd{{E}}$ in general is not equal with topological  second dual of $E$, $E^{**}$. Set $E=c$. It is obvious that 
 $\ud{c^{*}}=\udd{{c}}\neq \ell^\infty=c^{**}$
On the other hand $\ud{E}$ is neither norm closed nor order closed, since $E=\ell_2$, then $\ud{\ell_2}=c_{00}$.

\begin{proposition}\label{abc}
	Let $E$ be a Banach lattice and $G$ be a sublattice of $E$ such that one of the following conditions hold.
	\begin{enumerate}
		\item $ G $ is majorizing in $ E $;
		\item $ G $ is norm dense in $ E $;
		\item $ G $ is a projection band in $ E $.
	\end{enumerate}
	If $\ud{E} = E^*$, then $G^* = \ud{G}$.
	\end{proposition}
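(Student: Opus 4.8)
The plan is to reduce the statement to the behaviour of $un$-convergence under passage to the sublattice $G$, and then to combine this with the Hahn--Banach theorem and the hypothesis $\ud{E}=E^{*}$. The key fact I would use is the following: \emph{under each of the three conditions, a net $(x_{\alpha})$ contained in $G$ is $un$-null in $G$ if and only if it is $un$-null in $E$} --- and in fact only the implication ``$un$-null in $G$ $\Rightarrow$ $un$-null in $E$'' is needed below. Granting this, I would argue as follows. Since $\ud{G}\subseteq G^{*}$ always holds, it suffices to prove $G^{*}\subseteq\ud{G}$. Fix $f\in G^{*}$ and, by Hahn--Banach, choose $\tilde f\in E^{*}$ with $\tilde f|_{G}=f$. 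As $\ud{E}=E^{*}$, the functional $\tilde f$ is $un$-continuous on $E$. Now let $(x_{\alpha})\subseteq G$ be norm bounded with $x_{\alpha}\unto 0$ in $G$; since the norm on $G$ is the one inherited from $E$, the net $(x_{\alpha})$ is norm bounded in $E$, and by the key fact $x_{\alpha}\unto 0$ in $E$, so $\tilde f(x_{\alpha})\to 0$. As $x_{\alpha}\in G$ this says $f(x_{\alpha})\to 0$, i.e. $f\in\ud{G}$. Hence $G^{*}=\ud{G}$.

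It remains to justify the key fact, and this is the only place where the three conditions are used. One implication is free: the lattice operations of $G$ agree with those of $E$, so for $u\in G^{+}$ the element $\abs{x_{\alpha}}\linf u$ is the same whether formed in $G$ or in $E$; hence being $un$-null in $E$ implies being $un$-null in $G$. For the converse, assume $x_{\alpha}\unto 0$ in $G$ and fix $u\in E^{+}$; the task is to show $\norm{\abs{x_{\alpha}}\linf u}\to 0$. In case (1) I would pick $v\in G^{+}$ with $u\le v$; then $\abs{x_{\alpha}}\linf u\le\abs{x_{\alpha}}\linf v$, and the right-hand norm tends to $0$ by hypothesis. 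In case (2), given $\epsilon>0$ I would choose $v\in G$ with $\norm{u-v}<\epsilon$ and replace it by $\abs{v}\in G^{+}$ (which only decreases the distance, as $u\ge 0$); from $u\le v+\abs{u-v}$ together with the elementary inequality $a\linf(b+c)\le(a\linf b)+c$ for $a,b,c\ge 0$ one gets $\abs{x_{\alpha}}\linf u\le(\abs{x_{\alpha}}\linf v)+\abs{u-v}$, whence $\limsup_{\alpha}\norm{\abs{x_{\alpha}}\linf u}\le\epsilon$, and letting $\epsilon\to 0$ finishes this case. In case (3) I would decompose $u=u_{1}+u_{2}$ along $E=G\oplus G^{d}$ with $u_{1}\in G^{+}$ and $u_{2}\in(G^{d})^{+}$; since $\abs{x_{\alpha}}$ is disjoint from $u_{2}$ one has $\abs{x_{\alpha}}\linf u=\abs{x_{\alpha}}\linf u_{1}$, whose norm tends to $0$.

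The main obstacle is precisely this key fact; the genuinely new content there is the passage from an arbitrary $u\in E^{+}$ to a suitable element of $G^{+}$, achieved by domination in case (1), by norm approximation in case (2), and by the band decomposition plus disjointness in case (3). (Statements of this flavour already appear in \cite{5}, so one may also simply cite them.) Once the key fact is available, the role of the hypothesis $\ud{E}=E^{*}$ is simply to make the Hahn--Banach extension $un$-continuous, and the remaining points --- compatibility of the norms and the inclusion $\ud{G}\subseteq G^{*}$ --- are routine.
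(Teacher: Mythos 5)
Your proof is correct and follows essentially the same route as the paper: extend $f\in G^{*}$ by Hahn--Banach, use $\ud{E}=E^{*}$ to conclude the extension is $un$-continuous, and invoke the fact that under each of the three hypotheses a norm bounded $un$-null net in $G$ is $un$-null in $E$. The only difference is that the paper simply cites this last fact (Theorem 4.3 of \cite{12}), whereas you supply a direct and correct proof of it in all three cases.
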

	\begin{proof}
		We know that $\ud{G} \subseteq G^*$. Now assume that $ f \in G^*$ and $(x_{\alpha})\subseteq G$ is norm bounded and $un$-null in $G$. By Theorem 3.6 of \cite{14a}, there exists $g \in E^*$ such that $f= g $ on $G$. Note that by assumption $g\in \ud{E}$. By Theorem 4.3  \cite{12}, $x_{\alpha}\xrightarrow{un}0$ in $E$.  We obvious  that $(x_\alpha)$ is norm bounded in $E$. Therefore  $f(x_{\alpha}) = g(x_{\alpha})\xrightarrow{\|.\|}0$ in $\mathbb{R}$. It follows that  $f\in \ud{G}$.
	\end{proof}
Naturally, we can define weakly unbounded norm topology ($wun$-topology) as follows.
\begin{definition}
The smallest topology $\tau$ that each $f\in\ud{E}$  is continuous with respect to that topology is called weakly unbounded norm topology (for short, $wun$-topology), and we denote  by $\tau_{wun}$. In the other words,
$$\tau_{wun}:=\bigcap \left\{ \tau : \text{ each } f\in\ud{E} \text{ is } \tau\text{-countinuous}\right\}.$$ 
\end{definition}
For every $\epsilon>0$ and each $f\in\ud{E}$, put
$$V_{\epsilon,f} =\left\{ x \in X : \abs{f(x)} < \epsilon \right\}.$$
It easily follows from Definition 2.4.2 of \cite{13} that
the collection of all $V_{\epsilon,f}$ is a subbasis for $wun$-topology at zero.\\
Let $x , y \in V_{\epsilon,f}$ and $ 0 \leq \lambda \leq 1$. We have $\abs{f(\lambda x + (1-\lambda)y)} = \lambda |f(x)| + (1-\lambda)|f(y)| \leq \lambda \epsilon + (1-\lambda) \epsilon =  \epsilon$. Therefore $ \lambda x + ( 1 - \lambda) y \in V_{\epsilon,f}$. Hence $\tau_{wun}$ is a locally convex in Banach lattice $E$.\\
Let $E$ be a Banach lattice. It is obvious that  for each norm bounded net $(x_\alpha)\subseteq E$,   $x_\alpha \xrightarrow{wun}0$ if and only if for each $f \in \ud{E}$, $f(x_\alpha)\xrightarrow{}0$ in $\mathbb{R}$.\\

It is obvious that every $un$-null net is $wun$-null in a Banach lattice, but in general the converse not holds. The following example shows that in general  both topologies $un$ and $wun$-topology are not the same.
On the other hand by Proposition 3.5 of \cite{12}, every norm bounded and disjoint net in order continuous Banach lattice $E$ is $wun$-null. Thus if we set $E=\ell^1$, then  $(e_n)$ is $wun$-null in $\ell^1$.

\begin{example}\label{piu}
 Consider the sequence $(e_n)$ in the sublattice $c$ of $\ell^{\infty}$. By Example \ref{Elin}, $\ud{c}=\ell^1$. For each $f=(x_1,x_2,...,x_n,...)\in\ud{c}$, $f(e_n) = x_n \xrightarrow{\|.\|}0$, therefore $e_n \xrightarrow{wun}0$ in $c$, but $(e_n)$ is not $un$-null in $c$. Consider $u = (1,1,1,...)\in c^+$. We have $\|e_n \wedge u\| = \|e_n\| = 1 \nrightarrow 0$.
\end{example}

The following facts are in $wun$-topology that will be used throughout the paper.

\begin{lemma}\label{tr}
	Let $E$ be a Banach lattice and $(x_\alpha)\subseteq E$, then 
	\begin{enumerate}
		\item $x_\alpha \xrightarrow{wun}x$ iff $(x_\alpha - x)\xrightarrow{wun}0$;
		\item $wun$-limits are unique;
		\item If $x_\alpha \xrightarrow{wun}x$ and $y_\beta \xrightarrow{wun}y$, then $ax_\alpha + b y_\beta \xrightarrow{wun} ax + by$, for any scalars $a,b$;
		\item If $x_\alpha \xrightarrow{wun}x$, then $y_\beta \xrightarrow{wun}x$, for every subnet $(y_\beta)$ of $(x_\alpha)$.
	\end{enumerate}
\end{lemma}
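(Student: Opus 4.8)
The plan is to verify each of the four assertions directly from the characterization of $wun$-convergence, namely that a norm bounded net $(x_\alpha)$ satisfies $x_\alpha \xrightarrow{wun} x$ if and only if $f(x_\alpha) \to f(x)$ in $\mathbb{R}$ for every $f \in \ud{E}$. This is exactly the situation one faces with the weak topology $\sigma(E, E^*)$, and the proofs will be the familiar ones transplanted to the sublattice $\ud{E} \subseteq E^*$ of functionals.

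For part (1), I would note that for any $f \in \ud{E}$, linearity of $f$ gives $f(x_\alpha) - f(x) = f(x_\alpha - x)$, so $f(x_\alpha) \to f(x)$ holds for all $f \in \ud{E}$ precisely when $f(x_\alpha - x) \to 0$ for all such $f$; this is the claim. For part (3), given scalars $a, b$ and $f \in \ud{E}$, I would write $f(ax_\alpha + by_\beta) = a f(x_\alpha) + b f(y_\beta) \to a f(x) + b f(y) = f(ax + by)$, using that the net $(ax_\alpha + by_\beta)$ over the product directed set is norm bounded because each of $(x_\alpha)$ and $(y_\beta)$ is (being $wun$-convergent in the sense defined, which presupposes norm boundedness); this is just the standard fact that a linear combination of convergent nets converges to the linear combination of the limits, applied coordinatewise in the functionals. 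For part (4), if $(y_\beta)$ is a subnet of $(x_\alpha)$ and $f \in \ud{E}$, then $(f(y_\beta))$ is a subnet of $(f(x_\alpha))$ in $\mathbb{R}$, and a subnet of a convergent net in a Hausdorff space converges to the same limit; norm boundedness of $(y_\beta)$ is inherited from $(x_\alpha)$.

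For part (2), uniqueness of $wun$-limits, the key point is that $\ud{E}$ must separate the points of $E$; granting that, if $x_\alpha \xrightarrow{wun} x$ and $x_\alpha \xrightarrow{wun} y$, then $f(x) = \lim f(x_\alpha) = f(y)$ for every $f \in \ud{E}$ by uniqueness of limits in $\mathbb{R}$, whence $f(x - y) = 0$ for all $f \in \ud{E}$ and therefore $x = y$. To see that $\ud{E}$ separates points, I would invoke the Hahn--Banach theorem to get, for a nonzero $z \in E$, a functional $g \in E^*$ with $g(z) \neq 0$ supported on the principal ideal $I_z$ (or better, use a positive functional): concretely, one can take $g$ to be an order continuous functional, or argue that the restriction to the ideal generated by $|z|$, which is an $AM$-space with unit $|z|$, yields a functional that extends to an element of $\ud{E}$. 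Equivalently, by Theorem~\ref{po}(2) the ideal $\ud{E}$ is always nontrivial enough; the cleanest route is that every order continuous functional on $E$ is $un$-continuous (a norm bounded $un$-null net is $uaw$-null, hence tested to zero by order continuous functionals), so $E^\sim_n \subseteq \ud{E}$, and $E^\sim_n$ separates points of $E$ by the Nakano-type separation results in the Aliprantis--Burkinshaw reference cited in the paper.

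The main obstacle is precisely this point-separation property underlying part (2): the locally convex topology $\tau_{wun}$ is automatically Hausdorff exactly when $\ud{E} = (E, \tau_{wun})^*$ separates points of $E$, and unlike the case of $E^*$ this is not a triviality since $\ud{E}$ can be a proper (and even non-closed) subspace of $E^*$, as the examples with $\ell^2$ and $c$ in the text illustrate. I would resolve it by pinning down a concrete dense-enough supply of $un$-continuous functionals — the order continuous functionals, or functionals coming from atoms/order intervals — and checking they are $un$-continuous directly from the definition; once separation is in hand, the remaining parts are routine manipulations of nets and linear functionals exactly as for the weak topology.
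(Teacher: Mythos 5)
Parts (1), (3) and (4) of your argument are correct and coincide with what the paper does: everything reduces, via linearity of each $f\in\ud{E}$ and the subnet property of convergent nets in $\mathbb{R}$, to the corresponding facts for scalar nets. No issues there.

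The genuine problem is part (2), and you have correctly located where it lives: uniqueness of $wun$-limits is exactly the statement that $\ud{E}$ separates the points of $E$. (The paper's own proof silently commits the error you are trying to avoid --- it derives $f(x-y)=0$ only for $f\in\ud{E}$ and then invokes the fact that $E^*$ separates points, which is a non sequitur.) However, your proposed repair does not work, and in fact no repair exists because the claim is false in general. First, order continuous functionals need not be $un$-continuous: take $E=L^1[0,1]$, $g=\mathbf{1}\in L^\infty=(L^1)^*$ (which is order continuous on $L^1$), and $x_n=n\,\mathbf{1}_{[0,1/n]}$; this sequence is norm bounded and $un$-null (it converges to $0$ in measure, and $L^1$ is order continuous with a weak unit), yet $g(x_n)=1$ for all $n$. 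The same construction applied to an arbitrary nonzero $g\in L^\infty$ shows $\ud{(L^1[0,1])}=\{0\}$, so $\tau_{wun}$ on $L^1[0,1]$ is the indiscrete topology and every net $wun$-converges to every point; the paper itself concedes this when it cites Corollary 5.4 of \cite{12} to conclude that a certain non-null sequence is $wun$-null in $L_1(\mu)$. Second, even where the order continuous dual does separate points it need not sit inside $\ud{E}$ (as the example just given shows), and where it is trivial (e.g.\ $C[0,1]$) it separates nothing. So your fallback chain ``$un$-null $\Rightarrow$ $uaw$-null $\Rightarrow$ killed by order continuous functionals'' breaks at the last arrow: $uaw$-nullity controls $|x_\alpha|\wedge u$ but says nothing about the part of $|x_\alpha|$ escaping above $u$, which is precisely what the spike $n\,\mathbf{1}_{[0,1/n]}$ exploits. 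The honest conclusion is that part (2) holds only under an additional hypothesis guaranteeing that $\ud{E}$ separates points (for instance $\ud{E}=E^*$, or $E$ atomic with order continuous norm, as in Proposition \ref{samedual}), and should be so restricted.
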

\begin{proof}
	\begin{enumerate}
	\item The proof is clear.
	\item Let $(x_\alpha)\subseteq E$ with $x_\alpha \xrightarrow{wun}x$ and $x_\alpha\xrightarrow{wun}y$. For each $
	f \in \ud{E} \subseteq E^*$ we have $\|f (x-y)\| = \| f (x-x_\alpha + x_\alpha -y)\| \leq \| f (x-x_\alpha)\| + \| f (x_\alpha - y)\| $. Therefore $f(x-y)= 0$. Since $E^*$ separates the point of $E$, so $x=y$.
	\item Let $f \in \ud{E}$. We have $\| f (ax_\alpha + b y_\beta - ax - by)\| \leq |a|\|f (x_\alpha -x )\| + |b| \| f (y_\beta - y)\|  $, and proof follows.
	\item Assume that $ (x_\alpha)\subseteq E$ and $x_\alpha \xrightarrow{wun}x$ in $E$. 
Because for each $f \in \ud{E}$, we have $\| f (x_\alpha)\| \rightarrow 0$. It is clear that each subnet $(f( y_\beta))$ of $(f (x_\alpha))$ is norm-null.
	\end{enumerate}
\end{proof}
\begin{remark} 
Since $wun$-limits are unique, therefore for each $x\in E$, $\{x\}$ is $wun$-closed. By condition 2  of Lemma \ref{tr}, $\tau_{wun}$ is a vector topology on $E$, and $(E,\tau_{wun} )$  is a topological vector space. By Theorem 1.12 of \cite{14a}, $\tau_{wun}$ is a Hausdorff topology. 
\end{remark}
Note that $wun$-topology is different with  weak toplogy (in short $w$-topology).  Consider the sequence $(e_n)$ in $\ell^1$.  Since $(e_n) $ is norm bounded and disjoint in $\ell^1$, then by Example \ref{piu}(1), $e_n \xrightarrow{wun}0$ in $\ell^1$. On the other hand,  $(e_n) $ is not  $w$-null in $\ell^1$. Therefore $\tau_w \neq \tau_{wun}$. 
Since   ${\ud{E}}$ is a subspace of $E^*$,  $w$-topology is weaker then $wun$-topology. Thus every $w$-null net is $wun$-null  for each Banach lattice $E$. \\
\begin{proposition}\label{ert}
 Let $E$ be a Banach lattice. If $E$ has strong unit, then $w$-topology and $wun$-topology coincide.
\end{proposition}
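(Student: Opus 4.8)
The plan is to reduce the statement to the identity $\ud{E}=E^{*}$. Since $\ud{E}$ is a linear subspace of $E^{*}$, the $wun$-topology $\tau_{wun}=\sigma(E,\ud{E})$ is always coarser than $\tau_{w}=\sigma(E,E^{*})$; hence once we know that every $f\in E^{*}$ is $un$-continuous, both topologies become the initial topology on $E$ induced by $E^{*}$ and therefore coincide. So the real content is: \emph{if $E$ has a strong unit, then every bounded linear functional on $E$ is $un$-continuous}, equivalently $E^{*}=\ud{E}$.

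To prove that, fix $f\in E^{*}$ and a norm bounded net $(x_{\alpha})\subseteq E$ with $x_{\alpha}\unto 0$; it suffices to show $\norm{x_{\alpha}}\to 0$, which forces $f(x_{\alpha})\to 0$. Let $e$ be a strong unit. First I would record that the order-unit functional $p(x)=\inf\{\lambda>0:\abs{x}\le\lambda e\}$ is well defined (because $e$ is a strong unit), is genuinely a norm on $E$ (by the Archimedean property of a Banach lattice), and is equivalent to $\norm{\cdot}$. The inequality $\norm{x}\le\norm{e}\,p(x)$ is immediate from $\abs{x}\le(p(x)+\varepsilon)e$ for every $\varepsilon>0$; the reverse comparison is the one place a genuine argument is needed: writing $E=\bigcup_{n\ge 1}n[-e,e]$ as a countable union of norm-closed, convex, symmetric sets (order intervals in a Banach lattice are norm closed) and invoking the Baire category theorem shows that $[-e,e]$ is a norm neighbourhood of $0$, i.e. $p\le c\norm{\cdot}$ for some $c>0$.

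With this in hand, norm-boundedness of $(x_{\alpha})$ upgrades to a uniform order bound $\abs{x_{\alpha}}\le M e$ for some $M\ge 1$. Then $\tfrac1M\abs{x_{\alpha}}\le e$, so $\tfrac1M\abs{x_{\alpha}}=\tfrac1M\abs{x_{\alpha}}\wedge e\le\abs{x_{\alpha}}\wedge e$, and taking norms of positive elements gives $\norm{x_{\alpha}}\le M\,\norm{\abs{x_{\alpha}}\wedge e}$. Now taking $u=e$ in the definition of $un$-convergence yields $\norm{\abs{x_{\alpha}}\wedge e}\to 0$, hence $\norm{x_{\alpha}}\to 0$ and $f(x_{\alpha})\to 0$. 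This proves $E^{*}\subseteq\ud{E}$; the inclusion $\ud{E}\subseteq E^{*}$ is automatic, so $E^{*}=\ud{E}$ and $\tau_{wun}=\tau_{w}$.

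The only delicate point is the equivalence of the order-unit norm with the given norm — that is, passing from \emph{norm} bounded to \emph{order} bounded by $e$ — which rests on the Baire category theorem together with norm-closedness of order intervals; everything else is elementary lattice arithmetic and a single use of the test vector $u=e$. One could alternatively quote that a Banach lattice with a strong unit is lattice isomorphic to an $AM$-space with unit, which makes the same reduction transparent.
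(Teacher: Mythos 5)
Your proof is correct, and its overall shape is the same as the paper's: both arguments reduce the statement to the identity $E^{*}=\ud{E}$, after first observing that $\tau_{wun}=\sigma(E,\ud{E})$ is automatically coarser than $\tau_{w}=\sigma(E,E^{*})$ because $\ud{E}\subseteq E^{*}$. The difference is in how that identity is obtained. The paper simply cites Theorem 2.3 of Kandic--Marabeh--Troitsky, which says that on a Banach lattice with a strong unit the $un$-topology agrees with the norm topology, whence every bounded functional is trivially $un$-continuous. You instead prove the needed implication from scratch: the Baire-category argument showing $[-e,e]$ is a norm neighbourhood of $0$ (so norm bounded $\Rightarrow$ dominated by $Me$), followed by the inequality $\tfrac1M\abs{x_{\alpha}}\le\abs{x_{\alpha}}\linf e$ and the single test vector $u=e$. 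Both steps are sound (the comparison $\tfrac1M\abs{x_{\alpha}}\le\abs{x_{\alpha}}\linf e$ does use $M\ge 1$, which you arranged), and norm-closedness of order intervals, needed for Baire, holds because the positive cone of a Banach lattice is closed. What your version buys is self-containedness — it is essentially a proof of the relevant half of the cited theorem — at the cost of a longer argument; the paper's version is a two-line reduction to the literature.
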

\begin{proof}
 Since $\ud{E}\subseteq E^*$, follows that $wun$-topology is weaker than  $w$-topology in Banach lattice $E$. When $E$ has strong unit then by Theorem 2.3 of \cite{12}, $\ud{E}=E^*$ and therefore,  $\tau_w=\tau_{wun}$ in $E$.
 \end{proof}

O. Zabti in \cite{15} has been introduced unbounded absolutely weakly topology (in shorn $uaw$-topology) and investigated some of its properties. 
Note that $wun$-topology is different with $uaw$-topology.  By Lemma 2 of \cite{15}, the sequence $(e_n)\subseteq \ell^\infty$ is $uaw$-null in $\ell^{\infty}$, but $(e_n)$ is not weak convergent to zero in $\ell^\infty$, and so by Proposition \ref{ert}, it is not $wun$-null in $\ell^\infty$.

\begin{remark}
Note that if Banach lattice $E$ has order continuous norm and $(x_\alpha)\subseteq E$ is norm bounded and $uaw$-null, then by Proposition 5 of \cite{15}, $x_\alpha \xrightarrow{wun}0$ in $E$. If $E$ is an $AM$-space with strong unit and $(x_n)\subseteq E$ with $x_n \xrightarrow{wun}0$, then $x_n\xrightarrow{w}0$ and by Exercise 5 of page 355 of \cite{1}, $|x_n|\xrightarrow{w}0$. It follows that  $x_n \xrightarrow{uaw}0$ in $E$.
\end{remark}
\begin{proposition}\label{samedual}
If a Banach lattice $E$ is atomic with order continuous norm, then $(E,\tau_{wun})^*=\ud{E}$. 
\end{proposition}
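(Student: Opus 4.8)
The plan is to establish the two inclusions separately. The inclusion $\ud{E}\subseteq(E,\tau_{wun})^*$ requires nothing beyond unwinding the definition: $\tau_{wun}$ is, by construction, the coarsest topology on $E$ making every $f\in\ud{E}$ continuous, so each $f\in\ud{E}$ is a (linear) $\tau_{wun}$-continuous functional. Note that this half of the argument uses neither atomicity nor order continuity of the norm.

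For the reverse inclusion $(E,\tau_{wun})^*\subseteq\ud{E}$ I would argue exactly as in the classical computation of the dual of a space equipped with a weak topology generated by a family of functionals. Let $g$ be a $\tau_{wun}$-continuous linear functional. Then $g^{-1}\big((-1,1)\big)$ is a $\tau_{wun}$-neighbourhood of $0$, and since the sets $V_{\epsilon,f}$ (with $\epsilon>0$ and $f\in\ud{E}$) form a subbasis for $\tau_{wun}$ at $0$, there are finitely many $f_1,\dots,f_n\in\ud{E}$ and some $\epsilon>0$ with $\bigcap_{i=1}^{n}V_{\epsilon,f_i}\subseteq g^{-1}\big((-1,1)\big)$. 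If $x\in\bigcap_{i=1}^{n}\ker f_i$, then $tx\in\bigcap_{i=1}^{n}V_{\epsilon,f_i}$ for every scalar $t$, so $|t|\,|g(x)|=|g(tx)|<1$ for all $t$, which forces $g(x)=0$. Hence $\bigcap_{i=1}^{n}\ker f_i\subseteq\ker g$, and by the standard linear-algebra lemma (a linear functional vanishing on the common kernel of $f_1,\dots,f_n$ belongs to their linear span) we obtain $g\in\operatorname{span}\{f_1,\dots,f_n\}$. Since $\ud{E}$ is a subspace of $E^{*}$, this gives $g\in\ud{E}$, and the two inclusions together yield $(E,\tau_{wun})^*=\ud{E}$.

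In fact the argument above works for an arbitrary Banach lattice, so in the statement the hypotheses that $E$ be atomic with order continuous norm are there to make the picture concrete and nondegenerate rather than to drive the identification of the dual: they ensure that $\ud{E}$ is rich enough to separate the points of $E$ (for instance, the coordinate functionals associated with the atoms can be checked to lie in $\ud{E}$), so that $\tau_{wun}$ is a genuine Hausdorff locally convex topology and $(E,\tau_{wun})^*$ is a meaningful object; and they make $\ud{E}$ concretely describable, with all norm bounded disjoint sequences $wun$-null by Proposition 3.5 of \cite{12}. The only point that needs a little care is the passage from the given subbasis $\{V_{\epsilon,f}\}$ to a single basic $0$-neighbourhood together with the correct invocation of the kernel lemma; I do not expect any substantive obstacle beyond that.
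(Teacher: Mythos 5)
Your proof is correct, and in substance it is the same argument the paper relies on: the paper simply cites Theorem~3.10 of Rudin's \emph{Functional Analysis} for the identification of the dual of a topology of the form $\sigma(E,\ud{E})$, whereas you write out the proof of that theorem (a finite intersection of subbasic neighbourhoods inside $g^{-1}((-1,1))$, vanishing of $g$ on $\bigcap_{i}\ker f_i$, the kernel lemma, and the fact that $\ud{E}$ is a linear subspace of $E^*$, so the span of the $f_i$ stays inside $\ud{E}$). The one genuine difference is how the hypotheses enter. Rudin's statement assumes the generating family of functionals separates points, so the paper first uses Theorem~5.2 of \cite{12} (local convexity of the $un$-topology on an atomic Banach lattice with order continuous norm) to conclude that $\ud{E}$ separates the points of $E$, and only then invokes Rudin. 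You correctly observe that the separation hypothesis plays no role in the computation of the dual --- it is needed only to make $\tau_{wun}$ Hausdorff --- so your argument in fact establishes $(E,\tau_{wun})^*=\ud{E}$ for an arbitrary Banach lattice, a mild strengthening of the stated proposition. Your closing remark about why atomicity and order continuity are nevertheless imposed (to guarantee that $\ud{E}$ separates points, hence that $\tau_{wun}$ is a Hausdorff locally convex topology) matches exactly the role those hypotheses play in the paper's own proof.
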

\begin{proof}
It follows from Theorem 5.2 of \cite{12} that $un$-topology is locally convex. Therefore, $\ud{E}$ separates the points of $E$. Thus, by Theorem 3.10 of \cite{14a} we have $(E,\tau_{wun})^*=\ud{E}$.
\end{proof}
Let $G$ be a sublattice of $E$ and $(x_\alpha)\subseteq G$. By Theorem 3.6 of \cite{14a}, $x_\alpha \xrightarrow{w}0$ in $G$ iff  $x_\alpha\xrightarrow{w}0$ in $E$. The situation is different for $wun$-convergence.
\begin{example}
	\begin{enumerate}
	\item Consider the sequence $(e_n)$ of $\ell^1$. It is $wun$-null in $\ell^1$ while  is not $wun$-nul in $\ell^{\infty}$.
\item Note that $\ell^1$ is an order continuous  Banach lattice with a weak unit $e$. It is known that $\ell^1$ can be represented as an order and norm dense ideal in $L_1(\mu)$ for some finite measures $\mu$. Consider the sequence $( x_n) = (\dfrac{1}{2}(e_1 - e_n))\subseteq \ell^1$.  Since $L_1(\mu)$ has order continuous norm and  is non-atomic, therefore by Corollary 5.4 of \cite{12}, $x_n \xrightarrow{wun}0$ in $L_1(\mu)$. On the other hand $(x_n)$ is not $wun$-null in $\ell^1$. Since $(x_n)$ is $un$-convergent to $\dfrac{1}{2}e_1$ in $\ell^1$ and therefore is $wun$-convergent to $\dfrac{1}{2}e_1$ in $\ell^1$.
\end{enumerate}
\end{example}
Let $G$ be a sublattice of a Banach lattice $E$.
In the following,  we bring the conditions that if a net $(x_\alpha)\subseteq G$ is $wun$-null in  $G$,  then is $wun$-null in $E$ and vic versa.
\begin{theorem}\label{norm}
	Let  $G$ be a sublattice of Banach lattice $E$ and  $(x_{\alpha})\subseteq G$.
	 \begin{enumerate}
\item	If $E^* = \ud{E}$ and $x_{\alpha}\xrightarrow{wun}0 $ in $E$, then  $x_{\alpha}\xrightarrow{wun}0$ in $G$.
	\item  If $G^* = \ud{G}$ and $x_{\alpha}\xrightarrow{wun} 0$ in $G$, then $x_{\alpha}\xrightarrow{wun}0$ in $E$.
	\end{enumerate}
\end{theorem}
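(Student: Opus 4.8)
The plan is to reduce both assertions to the elementary fact, already recorded (via Theorem 3.6 of \cite{14a}) in the paragraph preceding this theorem, that for a net lying in a sublattice $G$ of $E$ one has $x_\alpha \xrightarrow{w} 0$ in $G$ if and only if $x_\alpha \xrightarrow{w} 0$ in $E$; equivalently, the trace of $\sigma(E,E^*)$ on $G$ is $\sigma(G,G^*)$. The key point is that as soon as the $un$-dual is as big as the full topological dual, the $wun$-topology it generates is exactly the weak topology, so the statements become mere transfers of weak convergence between $G$ and $E$.

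First I would record the routine observation that for any Banach lattice $E$ and any net $(x_\alpha)\subseteq E$, $x_\alpha\xrightarrow{wun}0$ in $E$ if and only if $f(x_\alpha)\to 0$ for every $f\in\ud{E}$ (the sets $V_{\epsilon,f}$ form a subbase of $\tau_{wun}$ at $0$). Hence, if $\ud{E}=E^*$ then $wun$-convergence in $E$ coincides with weak convergence in $E$; and, in general, since $\ud{E}\subseteq E^*$, weak convergence always implies $wun$-convergence. The same two remarks apply verbatim with $G$ in place of $E$. For (1): assuming $E^*=\ud{E}$ and $x_\alpha\xrightarrow{wun}0$ in $E$, the observation gives $x_\alpha\xrightarrow{w}0$ in $E$, hence $x_\alpha\xrightarrow{w}0$ in $G$ by Theorem 3.6 of \cite{14a}; since $\ud{G}\subseteq G^*$, every $g\in\ud{G}$ satisfies $g(x_\alpha)\to 0$, i.e. $x_\alpha\xrightarrow{wun}0$ in $G$. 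For (2): assuming $G^*=\ud{G}$ and $x_\alpha\xrightarrow{wun}0$ in $G$, the observation applied to $G$ gives $x_\alpha\xrightarrow{w}0$ in $G$, hence $x_\alpha\xrightarrow{w}0$ in $E$ by the same theorem; since $\ud{E}\subseteq E^*$, every $f\in\ud{E}$ satisfies $f(x_\alpha)\to 0$, i.e. $x_\alpha\xrightarrow{wun}0$ in $E$.

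There is no genuine obstacle here; the statement is essentially a bookkeeping consequence of ``$\ud{(\cdot)}=(\cdot)^*\ \Rightarrow\ \tau_{wun}=\tau_w$'' together with the stability of weak convergence under passage to and from a sublattice. The only points deserving a line of care are (i) that the characterization of $wun$-convergence through the subbasic neighbourhoods $V_{\epsilon,f}$ is used without any norm-boundedness input (or, if the standing norm-boundedness convention is kept, that it is trivially preserved between $G$ and $E$ since they carry the same norm), and (ii) that ``$x_\alpha\xrightarrow{w}0$ in $G$'' is understood with respect to $\sigma(G,G^*)$, which by Hahn--Banach restriction is precisely the trace of $\sigma(E,E^*)$ — this being exactly the content of the cited Theorem 3.6 of \cite{14a}.
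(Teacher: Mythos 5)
Your proposal is correct and follows essentially the same route as the paper's own proof: use the hypothesis $\ud{(\cdot)}=(\cdot)^*$ to upgrade $wun$-convergence to weak convergence, transfer weak convergence between $G$ and $E$ via Theorem 3.6 of \cite{14a}, and then use $\ud{(\cdot)}\subseteq(\cdot)^*$ to come back down to $wun$-convergence. Your write-up is merely a bit more explicit about the last step than the paper's.
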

\begin{proof}
	\begin{enumerate}
\item	Let $(x_{\alpha})\subseteq G$ and $ x_{\alpha}\xrightarrow{wun}0$ in $E$. By $E^* = \ud{E}$, $x_{\alpha} \xrightarrow{w}0$ in $E$. By Theorem 3.6 of \cite{14a}, $x_{\alpha} \xrightarrow{w}0$ in $G$ and therefore $x_{\alpha} \xrightarrow{wun}0$ in $G$.
	\item If $x_{\alpha}\xrightarrow{wun}0$ in $G$, then $x_\alpha \xrightarrow{w}0$ in $G$ and therefore $x_\alpha \xrightarrow{w}0$ in $E$. So $x_\alpha \xrightarrow{wun}0$ in $E$.
	\end{enumerate}
\end{proof}
\begin{corollary}
	Let $G$ be a sublattice of Banach lattice $E$ and $(x_\alpha)\subseteq G$. If $E^* = \ud{E}$ and $G$ is a  majorizing or norm dense or
		  band projection in $E$, then $x_\alpha \xrightarrow{wun}0$ in $E$ iff $x_\alpha\xrightarrow{wun}0$ in $G$.
\end{corollary}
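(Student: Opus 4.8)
The plan is to obtain the corollary simply by combining Theorem~\ref{norm} with Proposition~\ref{abc}; no new construction is required. First I would dispatch the forward implication, which needs nothing about $G$ beyond its being a sublattice: assuming $E^* = \ud{E}$, part~(1) of Theorem~\ref{norm} says precisely that $x_\alpha \xrightarrow{wun} 0$ in $E$ forces $x_\alpha \xrightarrow{wun} 0$ in $G$.

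For the reverse implication the crucial step is to promote the hypothesis $\ud{E} = E^*$ to the assertion $\ud{G} = G^*$. This is exactly the content of Proposition~\ref{abc}: under any one of the three listed conditions on $G$ (majorizing, norm dense, or projection band in $E$), the equality $\ud{E} = E^*$ implies $G^* = \ud{G}$. Having secured $\ud{G} = G^*$, I would invoke part~(2) of Theorem~\ref{norm}, which then gives that $x_\alpha \xrightarrow{wun} 0$ in $G$ implies $x_\alpha \xrightarrow{wun} 0$ in $E$. Putting the two implications together yields the stated equivalence.

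I do not expect a genuine obstacle, since both ingredients are already proved earlier in the paper; the argument is a two-line bookkeeping step. The only points needing mild care are purely terminological: the ``band projection'' alternative in the corollary is the ``projection band'' hypothesis of Proposition~\ref{abc}, and the three cases enter the proof only through that proposition, so no case analysis is carried out explicitly in the body of the proof itself.
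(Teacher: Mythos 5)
Your proposal is correct and is precisely the intended derivation: the forward direction is Theorem~\ref{norm}(1), and the reverse direction follows by first applying Proposition~\ref{abc} to upgrade $E^*=\ud{E}$ to $G^*=\ud{G}$ under any of the three hypotheses on $G$, and then invoking Theorem~\ref{norm}(2). The paper leaves the proof implicit, and your two-step bookkeeping matches exactly what the stated results supply.
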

Let $\func{T}{E}{F}$ be a continuous operator between two Banach lattices. 
Then $T$ has a $un$-adjoint if there exists the unique operator $\ud{T}:\ud{F}\rightarrow\ud{E}$ satisfying
$$
<\ud{T}\ud{y},x> = <\ud{y},Tx> = \ud{y}(Tx),\qquad \forall \ud{y}\in \ud{F}, \forall x\in E.
$$
It easily follows from $\ud{F}\subseteq F^*$ that $\ud{T}=T^*|_{\ud{F}}$.

\begin{theorem}\label{mf}
	Let $\func{T}{E}{F}$ be an operator between two Banach lattices. If $T$ is $un$-continuous, then
	$T$ has $un$-adjoint.
\end{theorem}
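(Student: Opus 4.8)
The plan is to verify that the only possible candidate for $\ud T$, namely the restriction $T^{*}|_{\ud F}$ of the Banach-space adjoint $T^{*}\colon F^{*}\to E^{*}$, genuinely maps $\ud F$ into $\ud E$. Uniqueness is then immediate: if $S\colon\ud F\to\ud E$ satisfies $\langle S\ud y,x\rangle=\langle\ud y,Tx\rangle$ for all $\ud y\in\ud F$ and $x\in E$, then for each fixed $\ud y$ the functional $S\ud y\in E^{*}$ is determined by its values on $E$, and those values equal $\ud y(Tx)=(T^{*}\ud y)(x)$; hence $S\ud y=T^{*}\ud y$, so $S=T^{*}|_{\ud F}$ and $T$ has at most one $un$-adjoint.

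For existence, put $\ud T:=T^{*}|_{\ud F}$. Since $T^{*}$ is linear and bounded with $\norm{T^{*}}=\norm{T}$, once we know $\ud T(\ud F)\subseteq\ud E$ it follows that $\ud T$ is a bounded linear operator $\ud F\to\ud E$ with $\norm{\ud T}\le\norm{T}$ (recall that the norm on $\ud E$ is the one inherited from $E^{*}$). So the heart of the matter is to show $T^{*}\ud y=\ud y\circ T\in\ud E$ for every $\ud y\in\ud F$. Fix a norm bounded net $(x_{\alpha})\subseteq E$ with $x_{\alpha}\unto 0$. Because $T$ is a bounded operator, $(Tx_{\alpha})$ is norm bounded in $F$; because $T$ is $un$-continuous, $Tx_{\alpha}\unto 0$ in $F$; and because $\ud y\in\ud F$, applying the defining property of a $un$-continuous functional to the norm bounded $un$-null net $(Tx_{\alpha})$ yields $(\ud y\circ T)(x_{\alpha})=\ud y(Tx_{\alpha})\to 0$. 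Hence $\ud y\circ T$ is $un$-continuous on $E$, i.e.\ $T^{*}\ud y\in\ud E$.

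Once this is in hand, the pairing identity $\langle\ud T\ud y,x\rangle=\langle\ud y,Tx\rangle$ holds by the very definition of $T^{*}$, so $\ud T$ is a $un$-adjoint of $T$, unique by the first paragraph. I do not expect a genuine obstacle; the one point needing care is the bookkeeping of norm boundedness, since both the notion of a $un$-continuous operator and that of a $un$-continuous functional quantify only over norm bounded nets, and it is precisely the boundedness of $T$ that keeps $(Tx_{\alpha})$ norm bounded so that the two definitions can be chained.
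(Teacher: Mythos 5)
Your proposal is correct and follows essentially the same route as the paper: restrict $T^{*}$ to $\ud{F}$ and verify $T^{*}\ud{y}\in\ud{E}$ by chaining the boundedness of $T$, the $un$-continuity of $T$, and the $un$-continuity of $\ud{y}$ on a norm bounded $un$-null net. Your additional remarks on uniqueness and on why $(Tx_{\alpha})$ stays norm bounded are just more careful bookkeeping of the same argument.
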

\begin{proof}
	Assume that $T$ is $un$-continuous. It is enough to prove that
	$\ud{T}(\ud{F})\subseteq \ud{E}$. Let $\ud{y}\in\ud{F}$ and $(x_\alpha)$ be norm bounded and $un$-null net in $E$. Since $T$ is $un$-continuous, we have $(Tx_\alpha)$ is norm bounded and $un$-null in $F$. As $\ud{y}$ is $un$-continuous, $\ud{T}\ud{y}(x_\alpha)=\ud{y}(Tx_\alpha)\unto 0$. Thus, $\ud{T}\ud{y}\in\ud{E}$.
\end{proof}

\begin{example}\label{uy}
	The operator $T:C[0,1] \rightarrow c_0$, given by 
	$$ T(f) = (\int_0 ^1 f(x)\sin x dx, \int_0 ^1 f(x)\sin2x dx,...)$$ is a $un$-continuous. By Theorem \ref{mf}, $T$ has $un$-adjoint. We have $\ud{T} : \ud{c_0} \rightarrow \ud{(C[0,1])}$, given by\\
	$$  \langle\ud{T}(x_1,x_2,...),f\rangle = \langle(x_1,x_2,...),Tf\rangle = \sum_{n=1}^\infty x_n \int_0^1 f(x)\ sinnx\ dx,$$
	for all 	$ (x_1,x_2,...)\in \ud{c_0}$  and  $f\in C[0,1].$\\
\end{example}

Now, assume that $Q_E$ be a natural mapping from $E$ into $E^{**}$ where $\langle x^\prime, Q_E(x)\rangle=\langle x, x^\prime\rangle=x^\prime (x)$ for all $x\in E$ and $x^\prime\in E^*$. 
Since $\udd{{E}}$ is a subspace of $E^{**}$,  we have the following lemma.

\begin{lemma}\label{lm1}
Let $E$ be a Banach lattice. Then $Q_E(E)\subseteq \udd{{E}}$. 
\end{lemma}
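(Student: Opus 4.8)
The plan is to show that every element of $Q_E(E)$, viewed as a functional on $E^*$, in fact restricts to a $un$-continuous functional on the normed lattice $\ud{E}$. Concretely, fix $x \in E$; then $Q_E(x)$ is the evaluation functional $x' \mapsto x'(x)$ on $E^*$, and since $\ud{E}$ is a subspace of $E^*$, its restriction to $\ud{E}$ is a well-defined bounded linear functional. The point is to verify that this restriction is continuous for the $un$-topology on $\ud{E}$, i.e. that it lies in $\udd{E} = \ud{(\ud{E})}$, using the fact that $\ud{E}$ is itself a normed sublattice of a Banach lattice (as recorded right after Theorem \ref{po}), so that the notion $\udd{E}$ makes sense and the definition of $un$-continuity applies to functionals on $\ud{E}$.

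First I would recall that $\ud{E}$ is an ideal in $E^*$ by Theorem \ref{po}(2), hence a sublattice, and carries the norm $\|\cdot\|_{\ud{E}}$ which is the restriction of the dual norm. So $un$-convergence in $\ud{E}$ is defined via $|f_\alpha| \wedge g \nto 0$ for every $g \in \ud{E}^+$, where lattice operations are those of $E^*$. Then I would take a norm bounded net $(f_\alpha) \subseteq \ud{E}$ with $f_\alpha \unto 0$ in $\ud{E}$ and must show $\langle Q_E(x), f_\alpha \rangle = f_\alpha(x) \to 0$ for each fixed $x \in E$. By splitting $x = x^+ - x^-$ it suffices to treat $x \in E^+$. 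The key estimate is $|f_\alpha(x)| \le |f_\alpha|(x)$, and then one wants to compare $|f_\alpha|(x)$ with $(|f_\alpha| \wedge g)(x)$ for a suitable $g \in \ud{E}^+$. The natural candidate is to note that the evaluation $g \mapsto g(x)$ for $x \in E^+$ is an order continuous positive functional on $E^*$, namely $g(x) = \langle Q_E(x), g\rangle$, and $Q_E(x) \in (E^*)^\sim_n$.

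The main obstacle is precisely getting from $un$-null (an "unbounded" notion controlled only after meeting with order-bounded pieces) to the genuine order-bounded behavior needed to evaluate at a single $x$. I expect the cleanest route is: since $Q_E(x)$ for $x \in E^+$ is a positive order continuous functional on $E^*$, it is in particular $un$-continuous on $E^*$ by an analogue of Theorem \ref{po}(1) applied at the level of a single functional — or more directly, order continuous functionals are automatically $un$-continuous. Actually the sharpest tool available in the excerpt is Theorem \ref{po}(1) itself: one does not need all of $E^*$ to have order continuous norm. Instead I would argue that the principal ideal generated by any $g_0 \in \ud{E}^+$ inside $E^*$, together with the fact that $Q_E(x)$ is order continuous, forces $f_\alpha(x) \to 0$; concretely, for $x \in E^+$ write $f_\alpha(x) \le |f_\alpha|(x) = (|f_\alpha| \wedge ng_0)(x) + (|f_\alpha| - ng_0)^+(x)$ and control the first term by $un$-nullity and the tail by order continuity of $Q_E(x)$ together with norm boundedness of $(f_\alpha)$.

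Thus the skeleton is: (1) reduce to $x \in E^+$; (2) observe $Q_E(x)$ is a positive order continuous functional on $E^*$, hence also on the ideal $\ud{E}$; (3) use order continuity plus the defining inequality of $un$-convergence to dominate $|f_\alpha(x)|$ by quantities tending to zero; (4) conclude $Q_E(x) \in \ud{(\ud{E})} = \udd{E}$, so $Q_E(E) \subseteq \udd{E}$. I anticipate step (3) is where the real work lies, and it may be that the authors instead invoke a previously established fact (e.g. that every order continuous functional is $un$-continuous, which would follow from Theorem 6.4 of \cite{5} applied to the band generated by a single element) to bypass the explicit tail estimate; if such a citation is available it collapses (2)–(3) into a one-line argument.
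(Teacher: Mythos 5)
Your proposal correctly identifies the crux — passing from $un$-nullity of $(f_\alpha)$ in $\ud{E}$ to convergence of the single evaluation $f_\alpha(x)$ — and correctly flags step (3) as the place where the real work lies. But that work cannot be done, because both of your suggested routes fail. The shortcut ``order continuous functionals are automatically $un$-continuous'' is false: the functional $g\mapsto\sum_i g_i$ on $\ell^1$ is positive and order continuous, yet $(e_n)$ is norm bounded and $un$-null in $\ell^1$ while $\sum_i (e_n)_i=1$ for all $n$. The explicit estimate fares no better: in the decomposition $|f_\alpha|(x)=(|f_\alpha|\wedge ng_0)(x)+(|f_\alpha|-ng_0)^+(x)$, order continuity of $Q_E(x)$ controls the tail for each \emph{fixed} $f$, but gives no uniformity over a norm bounded net; with $f_n=e_n$ in $\ell^1$ and any $g_0\in(\ell^1)^+$ one has $(e_n-mg_0)^+(\mathbf{1})=(1-m(g_0)_n)^+\to 1$ as $n\to\infty$ for every fixed $m$, so the tail does not become small.

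In fact the gap is unfillable: the lemma as stated is false. Take $E=c$, so that $\ud{c}=c^*\cong\ell^1$ by Example \ref{Elin}. The coordinate functionals $e_n\in c^*$ form a disjoint, normalized sequence in the order continuous Banach lattice $\ud{c}$, hence are norm bounded and $un$-null there; but $\langle Q_c(\mathbf{1}),e_n\rangle=e_n(\mathbf{1})=1\not\to 0$, so $Q_c(\mathbf{1})\notin\udd{c}$. For what it is worth, the paper's own argument breaks at the same point yours would: it applies $un$-nullity to the test vectors $x'_\alpha\wedge y'$, which vary with $\alpha$, and then uses the identity $(x'_\alpha\wedge (y')^+)(x)=x'_\alpha(x)\wedge (y')^+(x)$, which is false in $E^*$ (for $x\ge 0$ one only has $(f\wedge g)(x)=\inf\{f(u)+g(x-u):0\le u\le x\}\le f(x)\wedge g(x)$, and the inequality is strict exactly in examples like the one above). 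So your instinct to be suspicious of step (3) was right; the correct conclusion is that no proof exists, and a correct version of the lemma would need an additional hypothesis (e.g.\ conditions forcing $\udd{E}=(\ud{E})^*$, or restricting to those $x$ for which $Q_E(x)$ is $un$-continuous on $\ud{E}$).
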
   
\begin{proof}
Let $(x^\prime_\alpha)\subseteq \ud{E}$ be a norm bounded and $un$-null net in $\ud{E}$. By Theorem \ref{po} we know that    $\ud{E}$ is an ideal in $E^*$,  and so  $x^\prime_\alpha \wedge y^\prime \in \ud{E}$ for all $y^\prime\in E^*$. 
It follows that $x^\prime_\alpha \wedge (x^\prime_\alpha \wedge y^\prime )\xrightarrow{\Vert .\Vert}0$ in $\ud{E}$. Thus for each $y^\prime \in E^*$, we have 
$x^\prime_\alpha \wedge y^\prime\xrightarrow{\Vert .\Vert}0$ in $\ud{E}$.\\ 
Let $x\in E$. If $x=0$, $Q_E(0)\in  \udd{{E}}$, and proof holds. Now assume that  $x\neq 0$. Then there exists $y^\prime\in E^*$ such that $y^\prime (x)=1$.
Then we have $(y^\prime)^+(x)\geq1$. 
Since
$x^\prime_\alpha (x) \wedge (y^\prime)^+(x)\xrightarrow{\Vert .\Vert}0$ in $\ud{E}$,  follows that $x^\prime_\alpha(x)\xrightarrow{\Vert .\Vert}0$ in $\ud{E}$, and so $Q_E(x)(x^\prime_\alpha )\rightarrow 0$. It follows that $Q_E(x)\in \udd{{E}}$ and proof follows.
\end{proof}

 Now the Lemma \ref{lm1} make motivation to us for definition  a new  topology  for $\ud{E}$,  that is,    the smallest topology on $\ud{E}$ such that each $Q_E(x)$ is continuous with respect to it where $x\in  E$.  This topology is called weak$^*$ unbounded topology (for short $w^*un$-topology). In this way,  $ x_{\alpha}\xrightarrow{w^*un}0$  if and only if $x^\prime (x)\rightarrow 0$ for all $x\in E$. It is clear that the $w^*un$-topology in $\ud{E}$ is a subtopology of $w^*$-topology in $E^*$,  and $w^*un$-topology  is a subset of $wun$-topology in $\ud{E}$, that is, 
 $\sigma (\ud{E},Q_E(E))\subseteq\sigma (\ud{E}, \udd{{E}})\subseteq  \sigma (\ud{E},(\ud{E})^*)\subseteq \sigma (\ud{E}, E^{**})$.

\begin{theorem}
Let $E$ be a Banach lattice. Then $B_{\ud{E}}=\{x^\prime \in \ud{E}:~\Vert x\Vert\leq 1\}$ is $w^*un$-compact.
\end{theorem}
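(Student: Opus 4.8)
The plan is to derive the statement from the Banach--Alaoglu theorem. As was observed just before the statement, the $w^{*}un$-topology on $\ud{E}$ is precisely the topology induced on the subspace $\ud{E}$ by the weak$^{*}$-topology of $E^{*}$: both are generated by the evaluation functionals $Q_E(x)$, $x\in E$. Since the norm $\norm{\cdot}_{\ud{E}}$ is the restriction of the $E^{*}$-norm, we have $B_{\ud{E}}=\ud{E}\cap B_{E^{*}}$, and $B_{E^{*}}$ is weak$^{*}$-compact by Banach--Alaoglu. Hence it suffices to prove that $B_{\ud{E}}$ is weak$^{*}$-closed in $B_{E^{*}}$: a closed subset of a compact space is compact, and this is exactly compactness for the $w^{*}un$-topology on $\ud{E}$.

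To carry out the closedness step I would take a net $(f_{\lambda})\subseteq B_{\ud{E}}$ with $f_{\lambda}\xrightarrow{w^{*}}f$ in $E^{*}$ and aim to show $f\in B_{\ud{E}}$. The inequality $\norm{f}\le 1$ is immediate from weak$^{*}$-lower semicontinuity of the norm (equivalently, $B_{E^{*}}$ is weak$^{*}$-closed). The real content is that $f$ is again $un$-continuous: fixing a norm bounded $un$-null net $(x_{\beta})\subseteq E$, one must deduce $f(x_{\beta})\to 0$ from the two facts available, namely $f_{\lambda}(x_{\beta})\to 0$ in $\beta$ for each fixed $\lambda$ and $f_{\lambda}(x)\to f(x)$ in $\lambda$ for each fixed $x$. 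To obtain the uniformity needed to interchange these two limits I would exploit the structure already proved in Theorem \ref{po}, that $\ud{E}$ is an ideal of $E^{*}$, so that the truncations $\abs{f_{\lambda}}\wedge g'$ and $\abs{f}\wedge g'$ stay in $\ud{E}$ for $g'\in E^{*}$, and try to bound the relevant tails of $f$ uniformly along $\lambda$.

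The genuine obstacle is exactly this interchange of limits: a weak$^{*}$ (pointwise) limit of norm bounded $un$-continuous functionals need not be $un$-continuous, so the closedness step does not go through in full generality without further input. It does go through cleanly when $\ud{E}=E^{*}$, for then $B_{\ud{E}}=B_{E^{*}}$ and the assertion is Banach--Alaoglu verbatim; by Theorem \ref{po} this already covers every $AM$-space and every $E$ whose dual $E^{*}$ has order continuous norm. For the general case one would need a description of the weak$^{*}$-closure of $\ud{E}$ inside $E^{*}$ sharp enough to guarantee that it adds no new points to $B_{\ud{E}}$, and I would expect the proof to hinge on precisely that description; the remaining topological skeleton — identify the topology, place $B_{\ud{E}}$ inside the $w^{*}$-compact $B_{E^{*}}$, invoke Alaoglu, check closedness — is otherwise routine.
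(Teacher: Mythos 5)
Your reduction is the right one, and it is the same skeleton the paper uses: the $w^*un$-topology on $\ud{E}$ is the restriction of the $w^*$-topology of $E^*$, one has $B_{\ud{E}}=B_{E^*}\cap\ud{E}$, and since $(E^*,w^*)$ is Hausdorff, compactness of $B_{\ud{E}}$ is equivalent to $B_{\ud{E}}$ being $w^*$-closed in $E^*$, after which Banach--Alaoglu finishes. You are also right that the entire weight of the argument falls on that closedness step, i.e.\ on showing that a $w^*$-limit of a norm-one net of $un$-continuous functionals is again $un$-continuous, and that the required interchange of limits is not available in general. As a proof of the stated theorem your proposal is therefore incomplete: it only closes in the special case $\ud{E}=E^*$.

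However, the gap you have isolated is a genuine gap in the theorem, not merely in your attempt. The paper's proof observes only that $B_{\ud{E}}=B_{E^*}\cap\ud{E}$ is the trace on $\ud{E}$ of a $w^*$-compact set and then concludes; but the trace of a compact set on a non-closed subspace is merely relatively closed, not compact (compare $(0,1]=[0,1]\cap(0,\infty)$ inside $(0,\infty)$). Indeed, using the paper's own assertion that $\ud{\ell^2}=c_{00}$, the sequence $g_n=\sum_{k=1}^{n}2^{-k}e_k$ lies in $B_{\ud{\ell^2}}$ and converges in norm, hence in $w^*$, to $g=(2^{-k})_k\notin c_{00}$; since the $w^*un$-topology is Hausdorff and induced by $w^*$, every subnet of $(g_n)$ has $g$ as its only possible limit, so no subnet converges to a point of $B_{\ud{\ell^2}}$, and $B_{\ud{\ell^2}}$ is not $w^*un$-compact. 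The statement thus needs an additional hypothesis --- for instance that $\ud{E}$ (or its unit ball) is $w^*$-closed in $E^*$, which certainly holds when $\ud{E}=E^*$ --- and your remark that the result is Banach--Alaoglu verbatim in that case is the correct salvage.
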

\begin{proof}
It is obviously that $A\subseteq \ud{E}$ is $w^*un$-closed in $\ud{E}$ if and only if there exists $B\subseteq E^*$ which is $w^*$-closed in $E^*$  and $A=B\cap\ud{E}$.  Since $B_{\ud{E}}=B_{E^*}\cap\ud{E}$ and $B_{E^*}$ is $w^*$-compact, proof follows.
\end{proof}

In the following we have some facts for  $wun$-topology and $w^*un$-topology in $\ud{E}$ which  theirs proofs  has similar arguments such as classical studying for $w^*$ and $w$-topologies in $E^*$. 

\begin{corollary} Suppose that
$E$
and
$F$
are Banach lattices. Then we have the following assertions.
\begin{enumerate}
\item  If
$T\in B(E, F)$,
then
$\ud{T}$
is $w^*un$-$w^*un$ continuous. Conversely, if
$S$
is a $w^*un$-$w^*un$ continuous linear operator from
$\ud{F}$
into
$\ud{E}$,
then there is a
$T$
in
$B(E, F)$
such that
$\ud{T}=S$.
\item If
$T\in B(E, F)$, then 
$\udd{{T}}Q_E(E)\subseteq Q_F(F)$
and 
$Q^{-1}_F\udd{{T}}Q_E =T$.
\item A bounded linear operator from a Banach lattice into a Banach lattice is $wun$-compact if and only if its adjoint is $wun$-compact.
\item Suppose that
$T\in B(E, F)$
and 
$Q_F$
is the natural map from 
$F$
into
$\udd{{F}}$.
Then 
$T$
is $wun$-compact if and only if
$\udd{{T}}(\udd{{E}})\subseteq Q_F(F)$.

\end{enumerate}

\end{corollary}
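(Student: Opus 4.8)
The plan is to read the four statements as the $wun$/$w^{*}un$-analogues of four classical facts about the $w$- and $w^{*}$-topologies on dual Banach spaces — the description of $w^{*}$-$w^{*}$ continuous operators between dual spaces, the identity $T^{**}Q_E=Q_FT$, Schauder's theorem, and Gantmacher's theorem — and to transcribe each classical proof using the identification $\ud{T}=T^{*}|_{\ud{F}}$ (recorded before Theorem \ref{mf}), the fact that $\udd{T}$ is the second $un$-adjoint, i.e. the restriction of $(\ud{T})^{*}$ to $\udd{E}$, which lands in $\udd{F}$ by the argument of Theorem \ref{mf}, and the inclusions $Q_E(E)\subseteq\udd{E}$, $Q_F(F)\subseteq\udd{F}$ from Lemma \ref{lm1}.

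For (1), the forward implication is immediate: if a net $(y'_{\alpha})\subseteq\ud{F}$ is $w^{*}un$-null, i.e. $y'_{\alpha}(y)\to 0$ for every $y\in F$, then for each $x\in E$ one has $(\ud{T}y'_{\alpha})(x)=y'_{\alpha}(Tx)\to 0$ since $Tx\in F$, so $\ud{T}y'_{\alpha}$ is $w^{*}un$-null in $\ud{E}$. For the converse, given a linear $w^{*}un$-$w^{*}un$ continuous $S:\ud{F}\to\ud{E}$ and a fixed $x\in E$, the functional $y'\mapsto(Sy')(x)$ is $w^{*}un$-continuous on $\ud{F}$; since $Q_F(F)$ trivially separates the points of $\ud{F}\subseteq F^{*}$, one has $(\ud{F},\sigma(\ud{F},Q_F(F)))^{*}=Q_F(F)$, so that functional equals $Q_F(Tx)$ for a unique $Tx\in F$. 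Uniqueness forces $T$ to be linear, and the closed graph theorem gives boundedness: if $x_n\to x$ and $Tx_n\to z$ in norm, then $y'(Tx_n)=(Sy')(x_n)\to(Sy')(x)=y'(Tx)$ and also $y'(Tx_n)\to y'(z)$ for every $y'\in\ud{F}$, whence $Tx=z$ (using that $\ud{F}$ separates the points of $F$). By construction $\ud{T}=S$.

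For (2) it suffices to compute, for $x\in E$ and $y'\in\ud{F}$, that $\langle\udd{T}Q_E(x),y'\rangle=\langle Q_E(x),\ud{T}y'\rangle=(\ud{T}y')(x)=y'(Tx)=\langle Q_F(Tx),y'\rangle$; hence $\udd{T}Q_E=Q_FT$, so $\udd{T}Q_E(E)\subseteq Q_F(F)$ and, composing with the injective $Q_F^{-1}$, $Q_F^{-1}\udd{T}Q_E=T$. For (4), I would run the Gantmacher argument. If $T$ is $wun$-compact, fix $\Phi\in B_{\udd{E}}$; by a Goldstine-type density of $Q_E(B_E)$ in $B_{\udd{E}}$ for the $\sigma(\udd{E},\ud{E})$-topology, choose a net $(x_{\alpha})\subseteq B_E$ with $Q_E(x_{\alpha})\to\Phi$ pointwise on $\ud{E}$; then $\udd{T}Q_E(x_{\alpha})=Q_F(Tx_{\alpha})\to\udd{T}\Phi$ in $\sigma(\udd{F},\ud{F})$ by the continuity proved in (1), and passing to a subnet with $Tx_{\alpha'}\to z$ in $\sigma(F,\ud{F})$ — possible since $\overline{T(B_E)}^{\,wun}$ is $\sigma(F,\ud{F})$-compact — gives $\udd{T}\Phi=Q_F(z)\in Q_F(F)$; homogeneity then yields $\udd{T}(\udd{E})\subseteq Q_F(F)$. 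Conversely, if $\udd{T}(\udd{E})\subseteq Q_F(F)$, then by $\sigma(\udd{E},\ud{E})$-compactness of $B_{\udd{E}}$ and $w^{*}un$-$w^{*}un$ continuity of $\udd{T}$ the image $\udd{T}(B_{\udd{E}})$ is a $\sigma(\udd{F},\ud{F})$-compact subset of $Q_F(F)$, and since $Q_F$ is a homeomorphism of $(F,\sigma(F,\ud{F}))$ onto $(Q_F(F),\sigma(\udd{F},\ud{F}))$ while $T(B_E)\subseteq Q_F^{-1}(\udd{T}(B_{\udd{E}}))$ by (2), the set $T(B_E)$ is relatively $wun$-compact, i.e. $T$ is $wun$-compact. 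Finally (3) follows from (4) applied to $T$ and to $\ud{T}$, together with the fact that $\udd{T}$ is the $un$-adjoint of $\ud{T}$ and the identity of (2).

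The genuinely non-formal ingredient, and the step I expect to be the main obstacle, is the Goldstine-type statement that $Q_E(B_E)$ is $\sigma(\udd{E},\ud{E})$-dense in $B_{\udd{E}}$, together with the $w^{*}un$-compactness of $B_{\udd{E}}$ (the analogue for $\udd{E}$ of the compactness of $B_{\ud{E}}$ proved just above). The remaining steps are routine transcriptions of the classical duality proofs, but one must stay alert to the fact that $\ud{E}$ is in general neither norm-closed nor $w^{*}$-closed in $E^{*}$, so the separation and uniqueness points invoked in (1) and the subspace-topology bookkeeping in (4) need to be checked rather than merely quoted from the classical statements.
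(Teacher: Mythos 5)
The paper offers no proof of this corollary: it is preceded only by the remark that the proofs ``have similar arguments such as classical studying for $w^*$ and $w$-topologies in $E^*$,'' which is precisely the route you take, so your proposal is the intended argument actually written out rather than a different one. Of the two obstacles you flag, the Goldstine-type density is in fact not a gap: the norm on $\ud{E}$ is by definition the restriction of the dual norm of $E^*$, so in the dual pair $\bigl(\ud{E},(\ud{E})^*\bigr)$ the bipolar of the convex balanced set $Q_E(B_E)$ is $B_{(\ud{E})^*}$, and intersecting its $\sigma((\ud{E})^*,\ud{E})$-closure with $\udd{E}$ --- where $Q_E(B_E)$ lives by Lemma \ref{lm1} --- shows $Q_E(B_E)$ is $\sigma(\udd{E},\ud{E})$-dense in $B_{\udd{E}}$; the $w^*un$-compactness of $B_{\udd{E}}$ is the analogue of the theorem proved immediately before the corollary.

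The genuine gap --- one your proof shares with the paper's non-proof, and which you half-acknowledge but do not resolve --- is the repeated silent use of the fact that $\ud{E}$ and $\ud{F}$ separate points. This is needed for the uniqueness of $Tx$ in the converse of (1), for the injectivity of $Q_F$ and hence the very meaning of $Q_F^{-1}$ in (2), and for $Q_F$ to be a homeomorphism onto its image in (4). It can fail: the paper itself asserts separation only under the hypotheses of Proposition \ref{samedual} (atomic with order continuous norm), and its own example with $x_n=\tfrac12(e_1-e_n)$ in $L_1(\mu)$ exhibits a bounded sequence that is $wun$-null in $L_1(\mu)$ while $un$-convergent to $\tfrac12 e_1\neq 0$, so $\ud{(L_1(\mu))}$ does not separate points (and the uniqueness claim of Lemma \ref{tr} is already unjustified there, since only functionals in $\ud{E}$, not all of $E^*$, are available). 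So your argument is as complete as the statement permits, but the corollary needs an added hypothesis that the $un$-duals separate points (or atomicity plus order continuity); without it, parts (1), (2) and (4) are not merely unproved but ill-posed, and no transcription of the classical proofs can repair that.
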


	\begin{definition}
		Let $E$ and $F$ be two Banach lattices.
		A continuous operator $T: E \rightarrow F $ is said to be, weak unbounded norm continuous (or, $wun$-continuous for short),
		if $ x_{\alpha} \xrightarrow{wun} 0$ in $ E $ implies
		$ Tx_{\alpha} \xrightarrow{wun} 0 $ in $F$ for each norm bounded net $(x_\alpha)\subseteq E$. The collection of all $wun$-continuous operators from $E$ to $F$,  will be denoted by
		$ L_{wun}(E,F) $.
	\end{definition}

	\begin{example}
		\begin{enumerate}
			\item	Let $G$ be a majorizing or norm dense or band projection of $\ell^{\infty}$. Then each continuous operator from $G$ to $\ell^{\infty}$ is $wun$-continuous.
			\item Consider the functional $f :\ell^{\infty} \rightarrow \mathbb{R}$ defined with 
			$$f(x_1,x_2,...) = \lim_{n\rightarrow\infty} x_n.$$
		 Since $f$ is positive, 	$f$ is continuous. Now if $(x_n)\subseteq \ell^\infty$ is norm bounded and $x_n \xrightarrow{wun}0$ then $x_n\xrightarrow{w}0$ and therefore $f(x_n)\xrightarrow{w}0$. Hence $f(x_n)\xrightarrow{wun}0$ in $\mathbb{R}$.
		\end{enumerate}
	\end{example}
\begin{remark}
		Note that the operator  $T:\ell^{1} \rightarrow \ell^{\infty} $ defined by 
		\[
		T(x_{1},x_{2},\ldots) = (\sum_{i=1}^\infty x_i , \sum_{i=1}^\infty x_{i},\ldots)
		\]
		is continuous, while  is not $wun$-continuous. 
\end{remark}	
	
	\begin{theorem}\label{ytr}
A		functional $f:E\rightarrow \mathbb{R}$ is $un$-continuous if and only if is $wun$-continuous.
	\end{theorem}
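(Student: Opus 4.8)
The plan is to derive both implications directly from the definitions together with two facts already in hand: that $f$ is $un$-continuous exactly when $f\in\ud{E}$, and that every norm bounded $un$-null net is automatically $wun$-null. The one preliminary remark needed is that on the scalar lattice $\mathbb{R}$ the $wun$-topology coincides with the usual norm topology: indeed, $\mathbb{R}$ has a strong unit, so Proposition \ref{ert} gives $\tau_{wun}=\tau_w$ on $\mathbb{R}$, and the weak and norm topologies agree on a finite-dimensional space. Hence, for a net $(t_\alpha)\subseteq\mathbb{R}$, $t_\alpha\xrightarrow{wun}0$ if and only if $t_\alpha\to 0$.

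For the forward implication I would assume $f$ is $un$-continuous, so by definition $f\in\ud{E}\subseteq E^*$; in particular $f$ is norm continuous, which is the standing requirement in the definition of $wun$-continuity. Given a norm bounded net $(x_\alpha)\subseteq E$ with $x_\alpha\xrightarrow{wun}0$, the characterization of $wun$-convergence recorded after the definition of $\tau_{wun}$ says $g(x_\alpha)\to 0$ for every $g\in\ud{E}$; taking $g=f$ yields $f(x_\alpha)\to 0$, i.e. $f(x_\alpha)\xrightarrow{wun}0$ in $\mathbb{R}$. So $f$ is $wun$-continuous.

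For the converse I would assume $f$ is $wun$-continuous (hence $f\in E^*$), take a norm bounded net $(x_\alpha)\subseteq E$ with $x_\alpha\xrightarrow{un}0$, and use that every $un$-null net is $wun$-null to conclude $x_\alpha\xrightarrow{wun}0$. Then $wun$-continuity of $f$ gives $f(x_\alpha)\xrightarrow{wun}0$ in $\mathbb{R}$, which by the opening remark means $f(x_\alpha)\to 0$; thus $f$ is $un$-continuous.

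I do not anticipate any genuine obstacle: the statement is a bookkeeping consequence of the fact that $wun$-convergence is defined by testing precisely against the $un$-continuous functionals, together with the inclusion of $un$-null nets among $wun$-null nets. The only point that deserves an explicit word is identifying $wun$-convergence with ordinary convergence in the codomain $\mathbb{R}$, which is dispatched in one line via Proposition \ref{ert}.
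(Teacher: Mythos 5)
Your proof is correct. The converse direction is exactly the paper's: $un$-null implies $wun$-null, then apply $wun$-continuity and identify $wun$-convergence in $\mathbb{R}$ with ordinary convergence (your one-line justification via Proposition \ref{ert} and the strong unit $1\in\mathbb{R}$ is a welcome explicit touch; the paper simply asserts it). In the forward direction you take a slightly more direct route than the paper: you observe that $un$-continuity of $f$ means precisely $f\in\ud{E}$, and since $wun$-convergence is by definition tested against all of $\ud{E}$, the conclusion $f(x_\alpha)\to 0$ is immediate. The paper instead invokes Theorem \ref{mf} to produce the $un$-adjoint $\ud{f}:\ud{\mathbb{R}}\to\ud{E}$ and writes $\ud{y}(fx_\alpha)=\ud{f}\ud{y}(x_\alpha)$ with $\ud{f}\ud{y}\in\ud{E}$; since $\ud{\mathbb{R}}=\mathbb{R}$ and $\ud{f}\ud{y}$ is just a scalar multiple of $f$, this collapses to your argument. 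Your version buys a shorter proof that does not depend on the adjoint machinery; the paper's version is a (somewhat heavier) illustration of how Theorem \ref{mf} can be used. No gaps either way.
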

	\begin{proof}
		Let $f$ is $un$-continuous and $(x_{\alpha})\subseteq E$ is norm bounded with $x_{\alpha}\xrightarrow{wun}0 $ in $E$. Therefore for each $\ud{x} \in \ud{E}$, we have $\ud{x}(x_{\alpha})\xrightarrow{\|.\|}0 $ in $\mathbb{R}$. Since $f$ is $un$-continuous, therefore by Theorem \ref{mf}, $f$ has $un$-adjoint. Hence $\ud{f}(\ud{\mathbb{R}})\subseteq \ud{E}$. Therefore for all $\ud{y} \in \ud{\mathbb{R}}$, we have $\ud{y}(fx_{\alpha}) = \ud{f} \ud{y} (x_{\alpha}) \xrightarrow{\|.\|}0 $ in $\mathbb{R}$. Hence $f(x_{\alpha}) \xrightarrow{wun}0$ in $\mathbb{R}$.\\
		Conversally, let $(x_\alpha)\subseteq E$ is norm bounded and $x_\alpha \xrightarrow{un}0$ in $E$. It is clear that $x_\alpha \xrightarrow{wun}0$ in $E$ and therefore $f(x_\alpha)\xrightarrow{wun}0 $ in $\mathbb{R}$. So $f(x_\alpha)\xrightarrow{\|.\|}0$ in $\mathbb{R}$.
	\end{proof}
\begin{corollary}
	Let $E$ and $F$ be two Banach lattices. Similar to Therem \ref{ytr}, if operator $T:E\rightarrow F$ is $un$-continuous, then  is $wun$-continuous.
\end{corollary}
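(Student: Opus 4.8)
The plan is to copy the structure of the proof of Theorem \ref{ytr}, simply replacing the one–dimensional codomain $\mathbb{R}$ by the Banach lattice $F$ and using the $un$-adjoint produced by Theorem \ref{mf}. First I would fix a norm bounded net $(x_{\alpha})\subseteq E$ with $x_{\alpha}\xrightarrow{wun}0$ in $E$, and note that since $T$ is continuous we have $\|Tx_{\alpha}\|\leq\|T\|\,\|x_{\alpha}\|$, so $(Tx_{\alpha})$ is automatically norm bounded in $F$; hence it suffices to show that $g(Tx_{\alpha})\to 0$ for every $g\in\ud{F}$.

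Next, because $T$ is $un$-continuous, Theorem \ref{mf} guarantees that $T$ has a $un$-adjoint $\ud{T}:\ud{F}\rightarrow\ud{E}$, which is just the restriction $T^{*}|_{\ud{F}}$; in particular $\ud{T}g\in\ud{E}$ for each $g\in\ud{F}$, and $\ud{T}g(x)=g(Tx)$ for all $x\in E$. The hypothesis $x_{\alpha}\xrightarrow{wun}0$ in $E$ means exactly that $h(x_{\alpha})\to 0$ for every $h\in\ud{E}$; applying this with $h=\ud{T}g$ gives $g(Tx_{\alpha})=\ud{T}g(x_{\alpha})\to 0$. Since $g\in\ud{F}$ was arbitrary and $(Tx_{\alpha})$ is norm bounded, we conclude $Tx_{\alpha}\xrightarrow{wun}0$ in $F$, i.e. $T\in L_{wun}(E,F)$.

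I do not expect any genuine obstacle here: the whole argument rests on the existence of the $un$-adjoint, which is already supplied by Theorem \ref{mf}, together with the elementary fact that $wun$-convergence of a norm bounded net is tested precisely against the functionals of the $un$-dual. The only point I would state explicitly is the norm-boundedness of $(Tx_{\alpha})$, which is needed so that the definition of $wun$-continuity is applicable, and which follows at once from the boundedness of $T$.
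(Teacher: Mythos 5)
Your proof is correct and is essentially the argument the paper intends: the corollary is proved exactly as the forward direction of Theorem \ref{ytr}, replacing $\mathbb{R}$ by $F$, invoking Theorem \ref{mf} to get $\ud{T}(\ud{F})\subseteq\ud{E}$ and testing $wun$-convergence of the norm bounded net $(Tx_{\alpha})$ against the functionals of $\ud{F}$. Your explicit remark on the norm-boundedness of $(Tx_{\alpha})$ is a small but worthwhile addition that the paper leaves implicit.
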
	
\begin{remark} Note that, if Banach lattice $E$ is an atomic $KB$-space, then by Theorem 7.5 of \cite{12}, $B_E$ is $un$-compact. Since $\tau_{wun} \subseteq \tau_{un}$, therefore $B_E$ is $wun$-compact.
	\end{remark}
\begin{theorem}
	Let $E$ be an atomic Banach lattice with order continuous norm. If $A\subseteq E$ is a convex set, then $wun$-closure of $A$ is the same as its $un$-closure, that is; $\overline{A}^{wun}=\overline{A}^{un}$.
\end{theorem}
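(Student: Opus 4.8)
The plan is to reduce the statement to the classical Mazur--Hahn--Banach principle that, in a locally convex space, the closure of a convex set depends only on the topological dual, and then to check that $(E,\tau_{un})$ and $(E,\tau_{wun})$ are locally convex topologies on $E$ sharing the same dual $\ud{E}$.

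First I would dispose of the inclusion $\overline{A}^{\,un}\subseteq\overline{A}^{\,wun}$, which needs no convexity. By definition every $f\in\ud{E}$ is $un$-continuous, hence continuous for $\tau_{un}$; since $\tau_{wun}$ is the coarsest topology making all such $f$ continuous, $\tau_{wun}\subseteq\tau_{un}$. A finer topology has more closed sets, so the smallest $\tau_{un}$-closed set containing $A$ is contained in the smallest $\tau_{wun}$-closed one, giving $\overline{A}^{\,un}\subseteq\overline{A}^{\,wun}$.

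For the reverse inclusion I would first identify $(E,\tau_{un})^{*}$. Since $E$ is atomic with order continuous norm, Theorem 5.2 of \cite{12} tells us $\tau_{un}$ is locally convex, and Proposition \ref{samedual} gives $(E,\tau_{wun})^{*}=\ud{E}$. From $\tau_{wun}\subseteq\tau_{un}$ we get $\ud{E}=(E,\tau_{wun})^{*}\subseteq(E,\tau_{un})^{*}$. Conversely, if $f$ is $\tau_{un}$-continuous and $(x_\alpha)\subseteq E$ is norm bounded with $x_\alpha\xrightarrow{un}0$, then $x_\alpha\to 0$ in $\tau_{un}$, so $f(x_\alpha)\to 0$; thus $f\in\ud{E}$, and hence $(E,\tau_{un})^{*}=\ud{E}$ as well. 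Then I would run the separation argument: assuming $A\neq\emptyset$, put $C:=\overline{A}^{\,un}$, which is a $\tau_{un}$-closed convex set in the locally convex Hausdorff space $(E,\tau_{un})$. By the Hahn--Banach separation theorem (cf. \cite{14a}), $C$ equals the intersection of the closed half-spaces $\{x\in E:\ f(x)\le\sup_{a\in C}f(a)\}$ with $f$ ranging over $(E,\tau_{un})^{*}=\ud{E}$. Each such half-space is $\tau_{wun}$-closed because $f\in\ud{E}$ is $\tau_{wun}$-continuous, so $C$ is $\tau_{wun}$-closed; since $A\subseteq C$, this gives $\overline{A}^{\,wun}\subseteq C=\overline{A}^{\,un}$, and combined with the first step $\overline{A}^{\,wun}=\overline{A}^{\,un}$.

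The one point that merits care, and the natural place for an error to hide, is the identification $(E,\tau_{un})^{*}=\ud{E}$: one must make sure that $\tau_{un}$-continuity of a linear functional is genuinely equivalent to membership in $\ud{E}$ in the paper's net sense, which is exactly where local convexity of $\tau_{un}$ from \cite{12} and the coincidence $(E,\tau_{wun})^{*}=\ud{E}$ from Proposition \ref{samedual} enter. Everything else is the standard ``weak closure equals norm closure for convex sets'' mechanism, so I do not expect any further obstacle.
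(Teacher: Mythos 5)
Your proof is correct and follows essentially the same route as the paper's: both handle the easy inclusion via $\tau_{wun}\subseteq\tau_{un}$, and both obtain the nontrivial one from local convexity of $\tau_{un}$ (Theorem 5.2 of \cite{12}, which is where atomicity and order continuity enter) together with Hahn--Banach separation by a functional in $\ud{E}$, whose level half-spaces are $\tau_{wun}$-closed. The only cosmetic difference is that the paper separates a single point $x\notin\overline{A}^{un}$ from $\overline{A}^{un}$, whereas you express $\overline{A}^{un}$ as an intersection of $\tau_{wun}$-closed half-spaces after explicitly identifying $(E,\tau_{un})^{*}=\ud{E}$.
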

\begin{proof}
	Since $\tau_{wun}\subseteq\tau_{un}$, $\overline{A}^{un}\subseteq\overline{A}^{wun}$.
	On the other hand, by Theorem 5.2 of \cite{12}, $un$-topology is locally convex, hence if $x\notin\overline{A}^{un}$ then by Theorem 3.13 \cite{2}
	there exists some $f\in\ud{E}$, $\alpha\in\RR$ and an $\epsilon>0$
	such that 
	$$f(a)\leq \alpha < \alpha + \epsilon < f(x),$$
	for all $a\in \overline{A}^{un}$.
	Therefore, $\overline{A}^{un}\subseteq B=\{y: f(y)\leq \alpha\}$. By Proposition \ref{samedual} $f$ is $wun$-continuous, thus $B$ is $wun$-closed. Hence $\overline{A}^{wun}\subseteq B$. Therefore, $x\notin \overline{A}^{wun}$. Consequently, $\overline{A}^{un}=\overline{A}^{wun}$.
\end{proof}
\section{$wun$-Dunford-Pettis operators}
	A continuous operator $T$ from Banach lattice $E$ into Banach space $X$ is a $wun$-Dunford-Pettis whenever $x_n \xrightarrow{wun}0$ in $E$ implies $Tx_n \xrightarrow{\|.\|}0$ in $X$ for each norm bounded sequence $(x_n)\subseteq E$.
\begin{example}\label{we}
Operator $T:C[0,1] \rightarrow \ell^1$, given by 
$$ T(f) = (\dfrac{\int_0 ^1 f(x)\sin x dx}{1^2}, \dfrac{\int_0 ^1 f(x)\sin2x dx}{2^2},...)$$
is a $wun$-Dunford-Pettise operator. Let $(f_n)\subseteq C[0,1]$ is norm bounded and $f_n \xrightarrow{wun}0$. Since $(C[0.1])^* = \ud{(C[0,1])}$, so $f_n \xrightarrow{w}0$ in $C[0,1]$. By continuity of $T$, we have $T(f_n)\xrightarrow{w}0$ in $\ell^1$ and by Schur property of $\ell^1$, $T(f_n)\xrightarrow{\|.\|}0$ in $\ell^1$.
\end{example}
\begin{remark} Let $E$ and $F$ be two Banach lattices and $X$ be a Banach space. If $T:E\rightarrow F$ and $S:F\rightarrow X$ are $wun$-Dounford-Pettis, then $SoT$ is $wun$-Dounford-Pettis.
\end{remark}
It is clear that if $T$ is $wun$-Dunford-Pettis, then it is Dunford-Pettis and $\sigma$-$un$-continuous. The identity operator $I:\ell^1\rightarrow \ell^1$ is a $\sigma$-$un$-continuous, but it is not $wun$-Dounford-Pettis operator.\\
 Here we give an example to illustrate the difference between Dunford-Pettis and $wun$-Dunford-Pettis operators.
\begin{example}\label{poi}
	The operator $T:\ell^{1} \rightarrow \ell^{\infty} $ defined by 
	\[
	T(x_{1},x_{2},\ldots) = (\sum_{i=1}^\infty x_i , \sum_{i=1}^\infty x_{i},\ldots)
	\]
	is a Dunford-Pettis operator
	($\ell^{1}$ has Schur property and $ T $ is a continuous operator). 
	Consider $(e_n)\subseteq \ell^1$. $e_n \xrightarrow{wun}0$ in $\ell^1$. We have $T(e_{n})=(1,1,1,\ldots)$, 
	therefore $ (Te_n) $ is not convergent to zero in norm topology. Thus $ T $ 
	is not $wun$-Dunford-Pettis operator.
\end{example}
\begin{remark}
It is clear that if $E^* = \ud{E}$, then operator $T:E\rightarrow X$ is Dunford-Pettis iff it is $wun$-Dunford-Pettis.
\end{remark}

\begin{proposition}\label{p:4.55}
A linear operator from a Banach lattice into a Banach lattice is $wun$-Dounford-Pettis if and only if it is $wun$-norm\index{$wun$-norm} sequentially continuous.
\end{proposition}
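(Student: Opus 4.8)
The plan is to derive both implications from the linearity of the operator together with part (1) of Lemma~\ref{tr}, which records that $wun$-convergence is invariant under translation; no further lattice structure is needed, so the codomain being a Banach lattice plays no role beyond supplying its norm.

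For the ``only if'' direction I would assume $T\colon E\to F$ is $wun$-Dunford-Pettis and take any norm bounded sequence $(x_n)\subseteq E$ with $x_n\xrightarrow{wun}x$. By Lemma~\ref{tr}(1) the sequence $(x_n-x)$ is $wun$-null, and it is norm bounded since $(x_n)$ is and $x$ is a fixed vector. Hence the $wun$-Dunford-Pettis property gives $T(x_n-x)\nto 0$, and since $T$ is linear, $Tx_n=T(x_n-x)+Tx\nto Tx$. Thus $T$ is $wun$-norm sequentially continuous.

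For the ``if'' direction I would assume $T$ is $wun$-norm sequentially continuous and take a norm bounded sequence $(x_n)$ with $x_n\xrightarrow{wun}0$. Since $T0=0$, sequential continuity gives $Tx_n\nto 0$ directly, which is exactly the defining property of a $wun$-Dunford-Pettis operator.

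The one point that must be stated with care --- and which I view as a matter of bookkeeping rather than a genuine obstacle --- is the convention attached to ``$wun$-norm sequentially continuous'': the domain sequences must be required to be norm bounded, in exact parallel with the definition of a $wun$-Dunford-Pettis operator. Without this restriction the equivalence fails; for instance a compact multiplier on $\ell^2$ is $wun$-Dunford-Pettis yet does not send every (possibly unbounded) $wun$-null sequence to a norm null one. With the convention in force, the essential content of the proof is just Lemma~\ref{tr}(1) and linearity.
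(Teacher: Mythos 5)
Your proof is correct, and it takes a genuinely different and more elementary route than the paper. The paper argues through compactness: for the forward direction it invokes the ``$wun$-$wun$ continuity'' of $wun$-Dunford-Pettis operators together with the claim that the set consisting of the terms and limit of a $wun$-convergent sequence is $wun$-compact, and for the converse it appeals to the fact that $wun$-compact subsets are $wun$-sequentially compact. That argument is rather indirect (and arguably under-justified, since neither compactness fact is established elsewhere in the paper), whereas your argument needs only linearity and the translation invariance recorded in Lemma~\ref{tr}(1): subtract the limit, apply the defining property of a $wun$-Dunford-Pettis operator to the norm bounded $wun$-null sequence $(x_n-x)$, and add $Tx$ back; the converse is immediate from $T0=0$. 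Your explicit flagging of the norm-boundedness convention is also a genuine improvement --- the equivalence really does require that ``$wun$-norm sequentially continuous'' be read as a statement about norm bounded sequences, exactly as in the definition of $wun$-Dunford-Pettis operators, and your $\ell^2$ multiplier example correctly shows the statement fails without it. The only thing the paper's compactness route would ``buy'' is a template that transfers to non-linear maps, but for linear operators your argument is the right one.
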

\begin{proof}
The forward implication is an easy consequence of the $wun$-$wun$ continuity of $wun$-Dounford-Pettis operators along with the fact that a subset of a Banach lattice consisting of the terms and limit of a $wun$-convergent sequence is $wun$-compact. The converse follows directly from the fact that $wun$-compact subsets of a normed space are $wun$-sequentially compact.
\end{proof}

\begin{proposition}\label{ky}
	If each  Dunford-Pettis operator $T:E \rightarrow F$ between two Banach lattices is $wun$-Dunford-Pettis, then the norm of $E^*$ is order continuous or $F=\{0\}$.
\end{proposition}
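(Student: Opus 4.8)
The plan is to prove the contrapositive: I assume $F\neq\{0\}$ and that $E^{*}$ does not have order continuous norm, and I construct a Dunford-Pettis operator $T\colon E\to F$ that is not $wun$-Dunford-Pettis. The operator will be rank one. Fix a nonzero vector $y\in F$, and for each $\varphi\in E^{*}$ let $T_{\varphi}\colon E\to F$ be given by $T_{\varphi}(x)=\varphi(x)\,y$. Every $T_{\varphi}$ is bounded, and it is automatically Dunford-Pettis: if $x_{n}\xrightarrow{w}0$ in $E$ then $\varphi(x_{n})\to 0$, so $\|T_{\varphi}x_{n}\|=|\varphi(x_{n})|\,\|y\|\to 0$. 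Hence it is enough to produce one functional $\varphi\in E^{*}$ and a norm bounded sequence $(x_{n})\subseteq E$ with $x_{n}\xrightarrow{wun}0$ in $E$ but $\varphi(x_{n})\not\to 0$; for such $\varphi$ we obtain $\|T_{\varphi}x_{n}\|=|\varphi(x_{n})|\,\|y\|\not\to 0$, so $T_{\varphi}$ is not $wun$-Dunford-Pettis, which contradicts the hypothesis.

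To find this data I would appeal to the classical characterization of Banach lattices whose dual norm is order continuous: $E^{*}$ has order continuous norm if and only if every norm bounded disjoint sequence in $E^{+}$ converges weakly to zero (see, e.g., \cite{1}). Since by assumption $E^{*}$ is not order continuous, there is a norm bounded disjoint sequence $(x_{n})\subseteq E^{+}$ which is not weakly null; after passing to a subsequence and rescaling I may take $\|x_{n}\|\le 1$ for all $n$ and fix $\varphi\in E^{*}$ and $\varepsilon>0$ with $|\varphi(x_{n})|\ge\varepsilon$ for every $n$. It then remains only to verify that this norm bounded disjoint sequence is $wun$-null, that is, $\psi(x_{n})\to 0$ for every $\psi\in\ud{E}$. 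Because $\ud{E}$ is an ideal in $E^{*}$ by Theorem~\ref{po}, and $|\psi(x_{n})|\le|\psi|(|x_{n}|)$ with $(|x_{n}|)$ again disjoint and norm bounded, this reduces to the case $\psi\ge 0$ and $x_{n}\ge 0$; in the order continuous case it is exactly Proposition~3.5 of \cite{12}.

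The step I expect to be the main obstacle is precisely this last one in full generality --- showing that a norm bounded disjoint sequence in an \emph{arbitrary} Banach lattice is $wun$-null. Proposition~3.5 of \cite{12} is stated for order continuous Banach lattices, but in the contrapositive $E$ itself need not be order continuous, since only $E^{*}$ is assumed non-order-continuous (the two conditions are independent, as $\ell^{1}$ shows). To close this gap I would argue directly against the $un$-continuity of $\psi$: assuming $\psi(x_{n})\ge\varepsilon$ for all $n$ with $\psi\ge 0$, if the averages $\frac{1}{N}\sum_{n\le N}x_{n}$ are norm null this immediately contradicts $\frac{1}{N}\sum_{n\le N}\psi(x_{n})\ge\varepsilon$, while in the remaining case one uses the $\ell^{1}$-like behaviour of $(x_{n})$ to build a norm bounded $un$-null net on which $\psi$ stays away from $0$; a second route is to pass to a sublattice of $E$ carrying $(x_{n})$ on which Proposition~3.5 of \cite{12} applies and then transfer $wun$-nullity back to $E$ in the spirit of Theorem~\ref{norm}. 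Once $(x_{n})$ is known to be $wun$-null in $E$, the contrapositive --- and hence the proposition --- is complete.
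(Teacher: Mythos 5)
Your skeleton coincides with the paper's: the paper's ``proof'' is only a pointer to Theorem 3.1 of \cite{Hui}, whose argument is exactly your contrapositive --- a rank-one operator $\varphi\otimes y$ built from a positive $\varphi$ and a norm bounded disjoint sequence $(x_n)\subseteq B_E^+$ with $\varphi(x_n)\geq\varepsilon$, which exist precisely because $E^*$ fails to have order continuous norm. You have also correctly located the one point where the $uaw$-argument does not transfer verbatim: in \cite{Hui} the disjoint sequence is automatically $uaw$-null, whereas here one must show it is $wun$-null, and Proposition 3.5 of \cite{12} only covers order continuous $E$. The difficulty is that neither of your proposed repairs works as stated, so there is a genuine gap. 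Route (b) cannot work: in the relevant case the closed sublattice $G$ generated by $(x_n)$ is lattice isomorphic to $\ell^1$, and $\ud{(\ell^1)}\neq(\ell^1)^*$ (the paper's own example $f(x)=\sum_i x_i$), so Theorem \ref{norm}(2) does not apply; worse, that theorem transfers $wun$-nullity by first upgrading it to weak nullity in $E$, which would force $\varphi(x_n)\to 0$ and destroy the counterexample, and transferring by restricting functionals also fails because $\psi|_G$ need not lie in $\ud{G}$ for $\psi\in\ud{E}$ unless $G$ is majorizing, norm dense or a projection band. Route (a) is a dichotomy whose second branch --- the generic one, since already in $\ell^1$ the Ces\`aro means of $(e_n)$ have norm $1$ --- is left entirely unproved.

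The missing lemma (every norm bounded disjoint sequence in an arbitrary Banach lattice is $wun$-null) is nevertheless true; here is an argument that closes the gap. Let $0\leq\psi\in\ud{E}$ and let $(x_n)\subseteq B_E^+$ be disjoint with $\psi(x_n)\geq\varepsilon$ for all $n$. Since the sets $V_{\delta,u}$ form a base at $0$ directed by $(\delta,u)$, $un$-continuity of $\psi$ yields $u\in E^+$ and $\delta>0$ such that $x\in B_E$ and $\norm{\abs{x}\wedge u}<\delta$ imply $\abs{\psi(x)}<\varepsilon/2$ (otherwise one assembles a norm bounded $un$-null net on which $\abs{\psi}$ stays bounded away from zero). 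Pick an integer $k>1/\delta$. Since $(x_n-ku)^+\wedge u\leq\tfrac1k x_n$ (check it in the $C(K)$-representation of the principal ideal generated by $x_n+u$) and $(x_n-ku)^+\leq x_n\in B_E$, we get $\psi\bigl((x_n-ku)^+\bigr)<\varepsilon/2$, hence $\psi(x_n\wedge ku)=\psi(x_n)-\psi\bigl((x_n-ku)^+\bigr)>\varepsilon/2$. But the elements $x_n\wedge ku$ are pairwise disjoint and dominated by $ku$, so $\sum_{n=1}^N(x_n\wedge ku)=\bigvee_{n=1}^N(x_n\wedge ku)\leq ku$, giving $N\varepsilon/2<k\,\psi(u)$ for every $N$, a contradiction. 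The case of general $\psi\in\ud{E}$ and general disjoint $(x_n)$ follows from $\abs{\psi(x_n)}\leq\abs{\psi}(\abs{x_n})$ and the ideal property of $\ud{E}$ (Theorem \ref{po}). With this lemma in hand, your rank-one construction does prove the proposition.
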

\begin{proof}
The proof  has the similar argument   of Theorem 3.1 of \cite{Hui}.	
\end{proof}
\begin{theorem}
	Let  $F\neq \{0\}$ be a reflexive Banach lattice. The zero operator is the only  $wun$-Dounford-Pettis positive operator $T:\ell^1\rightarrow F$.
\end{theorem}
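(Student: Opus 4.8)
The plan is to probe $T$ with the canonical disjoint sequence of $\ell^{1}$, deduce that $T$ is compact, and then use the reflexivity and the lattice structure of $F$ to force $T=0$.

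\emph{Step 1.} Since $\ell^{1}$ has order continuous norm, the standard unit vectors $(e_{n})$ form a norm bounded disjoint sequence, so $e_{n}\xrightarrow{wun}0$ in $\ell^{1}$ (see the paragraph preceding Example \ref{piu}). As $T$ is $wun$-Dunford--Pettis, this forces $\|Te_{n}\|\to 0$ in $F$. Writing $y_{n}:=Te_{n}\in F^{+}$, we have $\sup_{n}\|y_{n}\|\leq \|T\|<\infty$, and for every $x=(x_{k})\in\ell^{1}$ the norm-convergent expansion $Tx=\sum_{k}x_{k}y_{k}$ holds; thus $T$ is completely determined by the positive null sequence $(y_{n})$.

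\emph{Step 2.} From $\|y_{n}\|\to 0$, the set $K:=\{y_{n}:n\in\mathbb{N}\}\cup\{0\}$ is norm compact, hence so is its closed absolutely convex hull; since $B_{\ell^{1}}$ is the closed absolutely convex hull of $\{e_{n}\}$, it follows that $T(B_{\ell^{1}})$ lies inside a norm compact set, so $T$ is compact. (Alternatively, $T$ is Dunford--Pettis because $\ell^{1}$ has the Schur property and weakly compact because $F$ is reflexive, and it is even $M$-weakly compact since every norm bounded disjoint sequence in $\ell^{1}$ is $wun$-null.)

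\emph{Step 3 (the main obstacle).} It remains to upgrade ``$T$ compact'' to ``$T=0$'', and this is the step where the reflexivity and the lattice structure of $F$ must be used in an essential way; I expect it to be the crux of the proof. I would argue by contradiction: if $T\neq 0$, then, using the positive sequence $(y_{n})$ together with suitably chosen norm bounded convex blocks of the $e_{n}$ — which are again $wun$-null in $\ell^{1}$ — one should be able to build a norm bounded $wun$-null sequence $(x_{n})$ in $\ell^{1}$ with $\|Tx_{n}\|\nrightarrow 0$, contradicting that $T$ is $wun$-Dunford--Pettis. To carry out this construction I would adapt the gliding-hump/disjointification technique behind Theorem 3.1 of \cite{Hui} (the result underlying Proposition \ref{ky}), using the reflexivity of $F$ to control the weak limit points of the blocks $Tx_{n}$ and positivity to keep their norms bounded below. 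Steps 1 and 2 are routine; it is this last construction that carries the weight of the argument.
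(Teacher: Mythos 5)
Your Steps 1 and 2 are correct but routine; the entire weight of the theorem rests on Step 3, which you do not carry out — you only assert that a gliding-hump construction ``should'' produce a bounded $wun$-null sequence $(x_n)$ with $\|Tx_n\|\nrightarrow 0$. That is a genuine gap, and in fact it is an unfillable one: your own Steps 1--2 already show that \emph{every} operator $T:\ell^1\to X$ with $\|Te_n\|\to 0$ is $wun$-Dunford--Pettis. Indeed, the coordinate functionals are $un$-continuous on $\ell^1$, so they lie in $\ud{(\ell^1)}$, and hence every norm bounded $wun$-null sequence $(x^{(n)})$ in $\ell^1$ is coordinatewise null; writing $Tx^{(n)}=\sum_k x^{(n)}_k Te_k$ and splitting the sum at an index $N$ with $\sup_{k>N}\|Te_k\|$ small gives
$$\|Tx^{(n)}\|\le\sum_{k\le N}|x^{(n)}_k|\,\|Te_k\|+\Bigl(\sup_n\|x^{(n)}\|_1\Bigr)\sup_{k>N}\|Te_k\|\longrightarrow 0.$$
Consequently the positive diagonal operator $T:\ell^1\to\ell^2$ defined by $T\bigl((x_k)\bigr)=(x_k/k)$ is a nonzero $wun$-Dunford--Pettis positive operator into a reflexive Banach lattice: the statement you were asked to prove is false, so no construction in Step 3 can succeed.

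For comparison, the paper argues differently: positivity plus reflexivity of $F$ gives weak compactness of $T$, the Dunford--Pettis property of $\ell^1$ then makes $T$ Dunford--Pettis, and the proof concludes by citing Proposition \ref{ky} with the observation that $(\ell^1)^*=\ell^\infty$ does not have order continuous norm and $F\neq\{0\}$. But Proposition \ref{ky}, read in contrapositive, only asserts that \emph{some} Dunford--Pettis operator $\ell^1\to F$ fails to be $wun$-Dunford--Pettis; it cannot be applied to the particular operator $T$ at hand. So the paper's proof contains essentially the same hole that your Step 3 leaves open, and the diagonal operator above shows the hole cannot be patched. Your instinct that Step 3 is ``the crux'' was exactly right; the correct conclusion is that the crux cannot be crossed.
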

\begin{proof}
	Let $T:\ell^1\rightarrow F$ be a positive operator. Since $F$ is reflexive, then by Theorem 5.29 of \cite{1}, $T$ is a weakly compact operator. By Theorem 5.85 of \cite{1}, $\ell^1$ has Dounford-Pettis property. Therefore by Theorem 5.82 of \cite{1}, $T$ is Dounford-Pettis. Since the norm of $(\ell^1)^*$ is not order continuous and $F\neq \{0\}$, so by Proposition \ref{ky}, $T$ is not $wun$-Dounford-Pettis.
\end{proof}
\begin{remark}
	It is known that every compact operator between Banach lattices is Dunford-Pettis. In the case of a $wun$-Dunford-Pettis operator, the situation is
	different. The Example \ref{poi} is compact while it is not $wun$-Dounford-Pettis.
\end{remark}
Here we give an example that it illustrate $uaw$-Dunford-Pettis operators differ from  $wun$-Dunford-Pettis operators.
\begin{example}\label{elin}
	For each continuous operator $T:C[0,1] \rightarrow c_0$,  the adjoint operator $T^* : \ell^1 \rightarrow (C[0,1])^*$ is a $uaw$-Dunford-Pettis. Indeed let $(x_n)\subseteq \ell^1$ is norm bounded and $x_n\xrightarrow{uaw}0$. By Proposition 5 of \cite{15}, $x_n \xrightarrow{w^*}0$ in $\ell^1$ and therefore $T^*(x_n)\xrightarrow{w^*}0$ in $(C[0,1])^*$. Since $C[0,1]$ has dual positive Schur property, so we have $T^*(x_n)\xrightarrow{\|.\|}0$ in $(C[0,1])^*$. Note that for each continuous operator $T:C[0,1]\rightarrow c_0$, the adjoint operator $T^*: \ell^1 \rightarrow (C[0,1])^*$ is Dunford-Pettis (we know that $T^*$ is continuous and $\ell^1$ has Schur property). Since $(\ell^l)^* = \ell^\infty $ does not has order continuous norm and $(C[0,1])^* \neq 0$, therefore by Proposition \ref{ky}, there exists some $T: C[0,1]\rightarrow c_0$ such that $T^*$ is not $wun$-Dunford-Pettis. 
\end{example}
\begin{remark}
	If $T:E\rightarrow X$ is a $wun$-Dounford-Pettis where $E$ has order continuous norm, then $T$ is a $M$-weakly compact and therefore by Theorem 1 of \cite{zabeti1}, it is a $uaw$-Dounford-Pettis.
\end{remark}
\begin{theorem}\label{elia}
	Let $T:E\rightarrow X$ be an operator from $AM$-space $E$  to Banach space $X$. Then the following assertions are equivalent:
	\begin{enumerate}
		\item $T$ is $M$-weakly compact.
		\item $T$ is weakly compact.
\item $T$ is Dunford-Pettis.
	\item $T$ is $wun$-Dunford-Pettis.
\item $T$ is $uaw$-Dunford-Pettis.
\item $T$ is $b$-weakly compact.
\item Moreover if $E$ has property $(b)$, $T$ is order weakly compact.
\end{enumerate}
\end{theorem}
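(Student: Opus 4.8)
The plan is to run the equivalence as a short cycle among $(1)$, $(2)$, $(3)$ and then attach $(4)$, $(5)$ to $(3)$ and $(6)$, $(7)$ to the cycle, everything resting on three features of an $AM$-space $E$: (i) $E^{*}$ is an $AL$-space, hence has order continuous norm, so Theorem~\ref{po}(1) gives $E^{*}=\ud{E}$, and consequently $wun$-convergence coincides with weak convergence on $E$; (ii) $E$ has the Dunford--Pettis property; (iii) every norm bounded disjoint sequence $(x_{n})\subseteq E$ is weakly null, and also $uaw$-null. Feature (iii) is the engine and is easy to see: via Kakutani, embed $E$ as a closed sublattice of some $C(K)$; disjointness forces $\sum_{n}|x_{n}|\le(\sup_{k}\|x_{k}\|)\mathbf{1}$ pointwise, so for every $\mu\in C(K)^{*}$ one has $\sum_{n}|\langle x_{n},\mu\rangle|<\infty$, hence $\langle x_{n},\mu\rangle\to0$; by Theorem~3.6 of \cite{14a} this gives $x_{n}\xrightarrow{w}0$ in $E$, the same argument applied to $(|x_{n}|)$ gives $|x_{n}|\xrightarrow{w}0$, and then $0\le|x_{n}|\wedge u\le|x_{n}|$ shows $(x_{n})$ is $uaw$-null.

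With this, $(1)\Rightarrow(2)$ is the classical fact that $M$-weakly compact operators are weakly compact; $(2)\Rightarrow(3)$ follows from the Dunford--Pettis property of $E$ together with Theorem~5.82 of \cite{1}; and $(3)\Rightarrow(1)$: a norm bounded disjoint sequence is weakly null by (iii), hence mapped to a norm null sequence by the Dunford--Pettis operator $T$, so $T$ is $M$-weakly compact. This closes the loop, so $(1)$, $(2)$, $(3)$ are equivalent. Next, $(3)\Leftrightarrow(4)$ is exactly the remark preceding Proposition~\ref{p:4.55} applied with $E^{*}=\ud{E}$. For $(3)\Leftrightarrow(5)$: $(1)\Rightarrow(5)$ is Theorem~1 of \cite{zabeti1} ($M$-weakly compact operators are $uaw$-Dunford--Pettis), and $(5)\Rightarrow(1)$ holds because norm bounded disjoint sequences in $E$ are $uaw$-null by (iii), so a $uaw$-Dunford--Pettis $T$ sends them to norm null sequences.

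It remains to fit in $(6)$ and $(7)$. Here I would use that the order bidual $(E^{\sim})^{\sim}$ of an $AM$-space is again an $AM$-space with unit $e$, that the canonical embedding $J:E\to(E^{\sim})^{\sim}$ is norm decreasing, and hence $J(B_{E})\subseteq[-e,e]$, i.e.\ $B_{E}$ is $b$-order bounded; together with the standard fact (see \cite{Alpay1}) that $b$-order bounded sets are norm bounded, this yields $(2)\Leftrightarrow(6)$ directly --- $(2)\Rightarrow(6)$ because a weakly compact operator carries the $b$-order bounded (hence norm bounded) sets to relatively weakly compact sets, and $(6)\Rightarrow(2)$ by applying $b$-weak compactness to $B_{E}$ itself. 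Finally $(2)\Rightarrow(7)$ is the trivial inclusion (weakly compact $\Rightarrow$ order weakly compact), and for the converse under property~$(b)$ one writes a $b$-order bounded (hence, by property~$(b)$, order bounded) set as $A\subseteq[-x,x]=[0,2x]-x$, so $T(A)\subseteq2T[0,x]-Tx$ is relatively weakly compact whenever $T$ is order weakly compact; thus $(7)\Rightarrow(6)$ and the list is closed.

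The routine part is the cycle $(1)\Leftrightarrow(2)\Leftrightarrow(3)$ together with $(4)$ and $(5)$; feature (iii) and the Dunford--Pettis property do all the work and the citations are already available. I expect the main obstacle to be the last paragraph: one must pin down, for an arbitrary possibly non-unital $AM$-space, that $B_{E}$ is genuinely $b$-order bounded and that $b$-order bounded subsets are norm bounded, and verify the precise role of property~$(b)$ in forcing order weak compactness into the equivalence --- this is where the order bidual $(E^{\sim})^{\sim}$ and the structure theory of property~$(b)$ from \cite{Alpay1} really have to be used with care.
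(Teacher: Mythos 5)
Your proposal is correct and follows the same overall architecture as the paper's proof: a cycle through $(1)$, $(2)$, $(3)$, with $(4)$ and $(5)$ hung off $(3)$ and $(6)$, $(7)$ attached via weak compactness; the steps $(1)\Leftrightarrow(2)$, $(2)\Rightarrow(3)$ and $(3)\Leftrightarrow(4)$ are verbatim the paper's (Theorems 5.62, 5.85, 5.82 of \cite{1}, and $E^{*}=\ud{E}$). Where you diverge is in making things self-contained: your ``feature (iii)'' --- that norm bounded disjoint sequences in an $AM$-space are weakly null and $uaw$-null, proved via the Kakutani embedding and the pointwise estimate $\sum_{n\le N}|x_n|=\bigvee_{n\le N}|x_n|$ --- replaces the paper's citations of Theorem 3.7.10 of \cite{14} for $(3)\Rightarrow(1)$ and of Corollary 3.7 of \cite{Hui} for $(3)\Leftrightarrow(5)$; both routes are valid, yours trades references for a short computation. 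The most substantive difference is $(6)\Rightarrow(2)$: the paper argues ``since $E$ has strong unit, $B_{E}$ is an order interval,'' which silently adds a hypothesis not present in the statement (an $AM$-space need not have a strong unit --- e.g.\ $c_{0}$), whereas your argument that $B_{E}$ is $b$-order bounded because the bidual of an $AM$-space is an $AM$-space with unit $e$ and $J(B_{E})\subseteq[-e,e]$ works for arbitrary $AM$-spaces; so the caution in your last paragraph is exactly the right instinct, and your version actually repairs a gap in the paper's proof rather than merely matching it. Your handling of $(7)$ via $T(A)\subseteq 2T[0,x]-Tx$ is a slightly more explicit version of the paper's appeal to property $(b)$ and is fine.
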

\begin{proof}
	\begin{enumerate}
\item[]	$(1)\Leftrightarrow (2)$ By Theorem 5.62 of \cite{1}, the proof is complete.
\item[]	$(2)\Rightarrow (3)$ By Theorems 5.85 and 5.82 of 
\cite{1}, $ T $ is a Dunford-Pettis operator.
\item[] $(3)\Rightarrow(1)$ Since $E$ is an $AM$-space, then by Theorem 4.23 of \cite{1}, $E^*$ is an $AL$-space and therefore $E^*$ has order continous norm. By Theorem 3.7.10 of \cite{14}, $T$ is $M$-weakly compact.
\item[] $(3)\Leftrightarrow (4)$ Since $E^* = \ud{E}$, therefore $x_n\xrightarrow{w}0$ in $E$ iff $x_n \xrightarrow{wun}0$ in $E$. Hence the proof is clear.
\item[]	$(3)\Leftrightarrow (5)$ Follows from Corollary 3.7 of \cite{Hui}.
\item[] $(2)\Rightarrow (6)$ Since each $b$-order bounded set in Banach lattice is norm bounded, hence the proof is clear.
\item[] $(6)\Rightarrow (2)$ Let $B_E$ be a closed unit ball of $E$. Since $E$ has strong unit, so $B_E$ is an order interval. Therefore $(TB_E)$ is a relatively weakly compact subset of $X$.
\item[] $(6)\Leftrightarrow (7)$ By property $(b)$ of $E$, $A\subseteq E$ is order bounded if and only if it is $b$-order bounded, hence the proof is clear.

	\end{enumerate}
\end{proof} 
	Let $S,T:E\rightarrow F$ be two positive operators satisfying $0\leq S\leq T$ with $T$ is $wun$-Dounford-Pettis, it is clear that $S$ is $wun$-Dounford-Pettis.

We  give an example that an operator $T$ is $wun$-Dounfore-Pettis while its adjoint is not $wun$-Dounford-Pettis and vic versa. In the following  with under certain conditions,  an operator  $T$ is $wun$-Dounford-Pettis iff $T^*$ is $wun$-Dounford-Pettis.

\begin{example}\label{as}
	\begin{enumerate}
		\item  Consider the operator $T:C[0,1] \rightarrow c_0$, given by 
$$ T(f) = (\int_0 ^1 f(x)\sin x dx, \int_0 ^1 f(x)\sin2x dx,...).$$
If $ (f_n)\subseteq C[0,1]$ is norm bounded and $f_n \xrightarrow{wun}0$ then $f_n\xrightarrow{w}0$.  We have \  \  $\|Tf_n\| = \sup_{m\geq 1}|\int_0^1 f_n(t) \ sin mt\  dt | \leq \int_0^1 |f_n(t)|dt\rightarrow 0$. Hence $T$ is a $wun$-Dunforde-Pettise. Similar to Example 3 of \cite{zabeti1}, adjoint of $T$, $T^*$  is not $wun$-Dounford-Pettis.
\item The functional $f:\ell^1 \rightarrow \mathbb{R}$ defined by 
$$f(x_1,x_2,...)= \sum_1^n x_i$$
is not $wun$-Dounford-Pettis, but $f^*$ is $wun$-Dounford-Pettis.
\end{enumerate}
\end{example}
\begin{theorem}
	Let $E$ and $F$ be two Banach lattices such that $E$ and $F^*$ have strong unit. Then $T:E \rightarrow F$ is $wun$-Dounford-Pettis iff $T^*$ is $wun$-Dounford-Pettis.
\end{theorem}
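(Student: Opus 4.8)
The plan is to use the strong-unit hypotheses to identify the $wun$-topologies of $E$ and $F^{*}$ with their respective weak topologies; this reduces the claim to the statement that $T$ is Dunford-Pettis if and only if $T^{*}$ is Dunford-Pettis, which I would then settle by combining Theorem \ref{elia} with the classical fact that $T$ is weakly compact if and only if $T^{*}$ is.

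First I would record the reduction. Since $E$ has a strong unit, Theorem 2.3 of \cite{12} gives $\ud{E}=E^{*}$, so by Proposition \ref{ert} the $wun$-topology on $E$ coincides with its weak topology. As every weakly null sequence is norm bounded, a norm bounded sequence in $E$ is $wun$-null exactly when it is weakly null; hence $T:E\to F$ is $wun$-Dunford-Pettis if and only if $T$ is Dunford-Pettis. The Banach lattice $F^{*}$ likewise has a strong unit, so $\ud{(F^{*})}=F^{**}$, and applying Proposition \ref{ert} to $F^{*}$ shows that the $wun$-topology of $F^{*}$ is its weak topology; thus $T^{*}:F^{*}\to E^{*}$ is $wun$-Dunford-Pettis if and only if $T^{*}$ is Dunford-Pettis.

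It now suffices to prove that $T$ is Dunford-Pettis if and only if $T^{*}$ is. Both $E$ and $F^{*}$ are $AM$-spaces with unit (a Banach lattice with a strong unit is such, up to an equivalent lattice renorming, which affects neither the weak nor the $un$-topology), so Theorem \ref{elia} is available: applied to $T:E\to F$, whose domain is the $AM$-space $E$, it gives that $T$ is Dunford-Pettis iff $T$ is weakly compact; applied to $T^{*}:F^{*}\to E^{*}$, whose domain is the $AM$-space $F^{*}$, it gives that $T^{*}$ is Dunford-Pettis iff $T^{*}$ is weakly compact. Since $T$ is weakly compact precisely when $T^{*}$ is (the Gantmacher theorem), the conditions ``$T$ Dunford-Pettis'' and ``$T^{*}$ Dunford-Pettis'' are equivalent, and together with the reduction above this proves the theorem.

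The one step that requires care is the reduction in the second paragraph: one must verify that for a Banach lattice with a strong unit the norm bounded $wun$-null sequences are exactly the weakly null ones, and remember that this has to be invoked for the dual lattice $F^{*}$ as well as for $E$. Beyond that the proof is a routine assembly of Theorem \ref{elia} and Gantmacher's theorem, and presents no real obstacle.
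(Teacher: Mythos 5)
Your proof is correct, but it routes the duality step differently from the paper. Both arguments lean on Theorem \ref{elia} applied twice (once to $T$ with domain the $AM$-space $E$, once to $T^*$ with domain the $AM$-space $F^*$), but the paper passes through item (6) of that theorem: it deduces from ``$T$ is $wun$-Dunford--Pettis'' that $T$ is $b$-weakly compact, invokes Theorem 3.1 of \cite{2c} (duality of $b$-weakly compact operators when $E^*$ is a $KB$-space, which holds here because $E^*$ is an $AL$-space) to get that $T^*$ is $b$-weakly compact, and then climbs back up via Theorem \ref{elia} applied to $F^*$; the converse uses Theorem 3.5 of \cite{2c}. You instead pass through items (2)--(4): you first make explicit the reduction that a strong unit forces $\ud{E}=E^*$, hence $wun$-Dunford--Pettis $\Leftrightarrow$ Dunford--Pettis $\Leftrightarrow$ weakly compact on an $AM$-space domain, and then close the loop with Gantmacher's theorem. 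Your route is somewhat more self-contained, since Gantmacher is classical and you avoid the external duality theorems for $b$-weakly compact operators; it also surfaces the reduction ``$wun$-DP $=$ DP under a strong unit'' that the paper leaves implicit inside Theorem \ref{elia}. Both proofs share the same small gap-to-be-acknowledged, namely that a Banach lattice with a strong unit is an $AM$-space only after an equivalent lattice renorming; you flag this explicitly and correctly note that none of the relevant operator classes is affected, so nothing is lost.
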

\begin{proof}
	\begin{enumerate}
\item	Let $T: E \rightarrow F$ be a $wun$-Dounford-Pettis. Since $E$ has strong unit, then by Theorem \ref{elia}, $T$ is $b$-weakly compact operator. because $E$ has strong unit therefore it ia an $AM$-space. By Theorem 4.23   of \cite{1}, $E^*$ is an $AL$-space. Each $AL$-space is a $KB$-space. Therefore $E^*$ is a $KB$-space. Hence by Theorem 3.1 of \cite{2c}, $T^*$ is $b$-weakly compact. Since $F^*$ has strong unit, hence by Theorem \ref{elia}, $T^* $ is $wun$-Donuford-Pettis.
\item The Proof has similar argument of $(1)$ and by  Theorem 3.5 of \cite{2c}, proof follows.
\end{enumerate}
\end{proof}
\begin{theorem}
	Let $F$ be a Banach lattic. If for each arbitrary Banach lattice $E$, operator $T:E\rightarrow F$ is $wun$-Dounford-Pettis, then
	\begin{enumerate}
		\item $F$ is $KB$-space.
		\item $T$ is $b$-weakly compact.
		\end{enumerate}
\end{theorem}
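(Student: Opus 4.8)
The plan is to prove (1) by contraposition --- from any failure of the $KB$-property in $F$, manufacture a single continuous operator into $F$ which is not $wun$-Dunford--Pettis --- and then to obtain (2) from (1) via the classical link between $KB$-spaces and $b$-weak compactness.

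For (1) I would argue as follows. Suppose $F$ is not a $KB$-space. By the well-known characterisation of $KB$-spaces (see, e.g., \cite{1}), $F$ then contains a closed sublattice which is lattice isomorphic to $c_0$; in particular there is an isomorphic embedding $J\colon c_0\to F$ and a constant $\delta>0$ with $\norm{Ju}\ge\delta\norm{u}$ for all $u\in c_0$. Take $E=c_0$ and $T=J\colon c_0\to F$. The unit vectors $(e_n)\subseteq c_0$ are norm bounded, and since $c_0^*=\ell^1$ they are weakly null ($\langle e_n,y\rangle=y_n\to0$ for $y=(y_k)\in\ell^1$); as every weakly null net is $wun$-null, $e_n\xrightarrow{wun}0$ in $c_0$. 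But $\norm{Te_n}=\norm{Je_n}\ge\delta>0$, so $(Te_n)$ is not norm null, whence $T$ is not $wun$-Dunford--Pettis --- contradicting the hypothesis. Therefore $F$ is a $KB$-space. (One could equally take $E=c$ and compose $J$ with the bounded projection $c\to c_0$, $x\mapsto x-(\lim x)(1,1,\dots)$, which fixes every $e_n$, and then invoke Example~\ref{piu}.)

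For (2): by part (1), $F$ is a $KB$-space, hence contains no subspace isomorphic to $c_0$. Let $T\colon E\to F$ be an arbitrary continuous operator. I would use the disjoint-sequence characterisation of $b$-weakly compact operators, so that it suffices to show $\norm{Tx_n}\to0$ for every disjoint, $b$-order bounded sequence $(x_n)\subseteq E^+$. For such a sequence the partial sums $s_N=x_1+\cdots+x_N=\bigvee_{k\le N}x_k$ are again $b$-order bounded (their canonical images in $(E^\sim)^\sim$ are dominated by the same upper bound), hence norm bounded, say $\sup_N\norm{s_N}=C<\infty$. Then for each $h\in E^*$ one has $\sum_n h^{\pm}(x_n)=\sup_N h^{\pm}(s_N)\le\norm{h^{\pm}}C$, so $\sum_n|h(x_n)|\le(\norm{h^{+}}+\norm{h^{-}})C<\infty$; that is, $\sum_n x_n$ is weakly unconditionally Cauchy in $E$. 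Applying the bounded operator $T$, the series $\sum_n Tx_n$ is weakly unconditionally Cauchy in $F$, and since $F$ contains no copy of $c_0$, the Bessaga--Pe\l czy\'nski $c_0$-theorem forces $\sum_n Tx_n$ to converge unconditionally in norm; in particular $\norm{Tx_n}\to0$, as required. (Alternatively, one may simply quote the known fact that every continuous operator from a Banach lattice into a $KB$-space is $b$-weakly compact, cf.\ \cite{Alpay1,2c}.)

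I expect part (1) to be entirely routine once the $c_0$-embedding characterisation of $KB$-spaces is invoked. The one place to be careful is part (2): the conceptual step is recognising that a disjoint $b$-order bounded sequence produces a weakly unconditionally Cauchy series under $T$, and that the $KB$-property (equivalently, the absence of a copy of $c_0$) is exactly what upgrades this to norm-nullity; the external inputs needed are the disjoint-sequence characterisation of $b$-weak compactness and the $c_0$-theorem. No part of the argument uses regularity of $T$.
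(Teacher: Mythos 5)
Your proof is correct. Part (1) is essentially the paper's own argument: both of you take the embedding of $c_0$ into $F$ that the failure of the $KB$-property provides (Theorem 4.61 of \cite{1}), note that $(e_n)$ is norm bounded and $wun$-null in $c_0$, and derive the contradiction $\norm{Te_n}\ge K>0$; your justification of the $wun$-nullity via weak nullity of $(e_n)$ against $c_0^*=\ell^1$ is a detail the paper leaves implicit. Part (2) is where you genuinely diverge. The paper factors an arbitrary $T\colon E\to F$ through a $KB$-space: since $c_0$ does not embed in $F$, Theorem 4.63 of \cite{1} gives $T=S\circ Q$ with $Q\colon E\to H$ a lattice homomorphism into a $KB$-space $H$, and then Lemma 2.1 of \cite{2c} (disjoint $b$-order bounded sequences in a $KB$-space are norm null) applied to $(Qx_n)$ yields $\norm{Tx_n}\to 0$. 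You instead stay inside $E$ and $F$: a disjoint, positive, $b$-order bounded sequence has $b$-order bounded (hence norm bounded) partial sums, so it generates a weakly unconditionally Cauchy series, and the Bessaga--Pe\l czy\'nski $c_0$-theorem in the $c_0$-free space $F$ upgrades $\sum_n Tx_n$ to an unconditionally norm convergent series, forcing $\norm{Tx_n}\to 0$. Both arguments rest on the same disjoint-sequence characterisation of $b$-weak compactness. Your route trades the lattice-theoretic factorisation machinery for classical Banach-space theory and is self-contained modulo the $c_0$-theorem; the paper's route is shorter given the cited results. Note also that your parenthetical shortcut --- every continuous operator into a $KB$-space is $b$-weakly compact, Proposition 2.2 of \cite{2c} --- is essentially the statement being reproved here, and the paper itself quotes that proposition in a subsequent remark.
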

\begin{proof}
	\begin{enumerate}
		\item 	Let $c_0$ be embeddable in $F$ and $T:c_0 \rightarrow F$ be this emmbeding. Then there exist two positive constants $K$ and $M$ satisfying 
	$$K\|(x_n)\| \leq \|T(x_n)\| \leq M\|(x_n)\| \ \ \ for \ all \ (x_n)\subseteq c_0.$$
	Consider the sequence $(e_n)\subseteq c_0$. $e_n \xrightarrow{wun}0$ and norm bounded in $c_0$ but $\| T(e_n)\| \geq K\|e_n\| = K> 0$ which contradicts with assumption. Therefore $c_0$ is not embeddable in $F$. Hence by Theorem 4.61 of \cite{1}, $F$ is a $KB$-space.
	\item By past part we have $F$ is $KB$-space. Since $c_0$ is not embeddable in $F$, then by Theorem 4.63 of \cite{1}, there exist a $KB$-space $H$, a lattice homomorphism $Q:E\rightarrow H$ and a continuous operator $S:H\rightarrow F$ such that $T = SoQ$. Let $(x_n)$ be a $b$-order bounded disjoint sequence in $E$. It is clear that $(Q(x_n))$ is also $b$-order bounded and disjoint sequence in $H$. By Lemma 2.1 of \cite{2c}, $Q(x_n)\xrightarrow{\|.\|}0$ in $H$. Thererore $T(x_n) = SoQ(x_n)\xrightarrow{\|.\|}0$. So $T$ is $b$-weakly compact.
	\end{enumerate}
\end{proof}
\begin{remark}
	Note that if $F$ is $KB$-space, then every operator $T$ from a Banach lattice $E$ into $F$, in general,  is not $wun$-Dounford-Pettis. By Example \ref{as} there exists adjoint operator $T^*$ from $\ell^1$ into $KB$-space $(C[0,1])^*$ such that it is not $wun$-Dounford-Pettis.
\end{remark}
\begin{remark}
	Let $E$ and $F$ be two Banach lattices. If  an operator $T:E\rightarrow F$ is $b$-weakly compact, in general, $T$ is not $wun$-Dounford-Pettis necessarily. We know that $\ell^1$ is  $KB$-space. Therefore by Theorem 4.61 of \cite{1}, $c_0$ is not embeddable in $\ell^1$. By Proposition 2.2 of \cite{2c}, for any Banach lattice $E$, each operator from $E$ into $\ell^1$ is $b$-weakly compact. On the other hand,  the identity operator $I:\ell^1\rightarrow \ell^1$ is not $wun$-Dounford-Pettis.	
\end{remark}


\begin{thebibliography}{99}
	\bibitem{1} {\it  Aliprantis, C.D., Burkinshaw, O.}:
	Positive Operators, Springer, Berlin 2006.  {\tt Zbl\,1098.47001, MR2262133.}	
\bibitem{Alpay1}  Alpay, S.,   Altin, B., Tonyali, C.  \newblock{On property (b) of vector lattices}, \newblock {\em  Positivity,} {\textbf 7} (2003), 135--139.
\bibitem{Alpay2} Alpay, S.,   Altin, B., Tonyali, C. \newblock {A note on Riesz spaces with property-b}, \newblock {\em Czechoslovak
Math. J.} {\textbf 56}  (2006),  765--772.
	\bibitem{2c} {Aqzzouz, B., Elbour, A., Hmichane, J.}:
{The duality problem for the class of $b$-weakly compact operators}. Positivity. (4) {\bf 13}, 683--692 (2009). {\tt Zbl\,1191.47024, MR2538515.} 
\bibitem{2} {\it  Conway, J.B}: A course in functional analysis, 2nd edition, springer-verlag, New york, 1990.
	\bibitem{5} {\it  Deng, Y.,  O’Brien, M.,  Troitsky, V.G.}:
	{Unbounded norm convergence in Banach lattices}, Positivity. 
	\bibitem{7c} {Gao, N.}:
	{Unbounded order convergence in dual spaces}, J. Math. Anal. Appl. {\bf 419}, 347--354 (2014). {\tt Zbl\,1316.46019, MR3217153.} 
	\bibitem{8} {Gao, N., Troitsky, V.G., Xanthos, F.}:
	{Uo-convergence and its applications to Cesàro means in Banach lattices}. Isr. J. Math. {\bf 220}, 649-–689 (2017). {\tt Zbl\,1395.46017, MR3666441.}		
	\bibitem{12} {\it  Kandic, M.,  Marabeh, M.A.A., Troitsky, V.G.}:
	{Unbounded norm topology in Banach lattices}, J. Math. Anal. Appl. {\bf 451}(2017), 259--279. {\tt Zbl\,1373.46011, MR3619237.}
	\bibitem{Hui} { Li, H.,  Chen, Z.}:   Some results on unbounded absolute weak  Dunford-Pettis operators,  Positivity. {\bf 24}(1),  501--505 (2020). {\tt Zbl\,.}
	\bibitem{12o} {\it  Matin, M.,  Haghnejad Azar, K.,  Alavizadeh, R.}:
	{Unbounded $\sigma$-order-norm continuous and $un$-continuous operators}. {\it submitted}
	\bibitem{13} {\it Megginson, R.E.}:
	{An introduction to Banach space theory}, 
	\bibitem{14} {Meyer-Nieberg, P.}:
	Banach lattices, Springer-verlag, Berlin, (1991). {\tt Zbl\,0743.46015, MR1128093.}
	 \bibitem{Hag} {\it   Mousavi Amiri, M.,  Haghnejad Azar, K.}: Bulletin of the Iranian Mathematical Society (2019), {https://doi.org/10.1007/s41980-019-00340-1}.
	Banach lattices, Springer-verlag, Berlin, (1991). {\tt Zbl\,0743.46015, MR1128093.}
	\bibitem{zabeti1} {Nazife E. Ö., Niyazi, A. G., Zabeti, O.}:  Unbounded absolutely weak Dunford–Pettis operators,  Turk. J.  Math. {\bf 43},  2731–2740 (2018). {\tt Zbl\, doi:10.3906/mat-1904-27}	
\bibitem{14a}
{\it Rudin, W.} 1973. \textit{Functional analysis}, Third Ed. McGraw-Hill. Inc. New York.
\bibitem{15} {\it  Zabeti. O. } \textit{Unbounded absolute weak convergence in Banach lattices}, preprint. arXive:1608.02151[math.FA].
\end{thebibliography}
\end{document}